\documentclass[12pt,a4paper]{amsart}
\usepackage{preamble}

\newcommand{\dd}[2]{\frac{d #1}{d #2}}

\newcommand{\ip}[1]{\left\langle #1 \right\rangle} 
\newcommand{\R}{\mathbb{R}}

\newcommand{\cV}{\mathcal{V}}
\newcommand{\cA}{\mathcal{A}}
\newcommand{\cM}{\mathcal{M}}

\newcommand{\cP}{\mathcal{P}}
\newcommand{\cL}{\mathcal{L}}
\newcommand{\cS}{\mathcal{S}}

\newcommand{\cX}{\mathfrak{X}}

\DeclareMathOperator{\grad}{grad}

\renewcommand{\div}{\diverg}

\renewcommand{\phi}{\varphi}
\newcommand{\om}{\omega}
\newcommand{\wcw}{\wedge\cdots\wedge}
\renewcommand{\epsilon}{\varepsilon}
\newcommand{\te}{{\tilde e}}

\newcommand{\eps}{\varepsilon}
\newcommand{\teps}{{\tilde \varepsilon}}

\newcommand{\tf}{{\tilde f}}
\newcommand{\tecu}{{\tilde e^{cu}}}

\newcommand{\tL}{{\tilde L}}

\newcommand{\slot}{{(\cdot)}}
\renewcommand{\div}{\textnormal{div}}

\title[Equivariant divergence formula for chaotic flows] 
{
Equivariant divergence formula for chaotic flows
}

\begin{document}

\begin{abstract}

We prove the equivariant divergence formula for the axiom A flow attractors, which is a recursive formula for perturbation of transfer operators of physical measures along center-unstable manifolds. Hence the linear response acquires an `ergodic theorem', which means that it can be sampled by recursively computing only $2u$ many vectors on one orbit, where $u$ is the unstable dimension.

\smallskip
\noindent \textbf{Keywords.}
linear response,
chaos, 
equivariant divergence,
adjoint shadowing.

\smallskip
\noindent \textbf{AMS subject classification numbers.}
37C40, 
47A05. 
37M25, 
90C31. 
\end{abstract}

\maketitle
\section{Introduction}

\subsection{Literature review}
\hfill
\vspace{0.1in}

The long-time statistic of a hyperbolic chaotic system is given by the SRB measure $\rho$, which is a model for fractal limiting invariant measures of chaos \cite{young2002srb,srbmap,srbflow}.
The linear response is the derivative of the SRB measure with respect to the parameters of the system.

There are several different formulas for the linear response.
The most well-known are the pathwise perturbation formula, the divergence formula, and the kernel differentiation formula \cite{Gallavotti1996,Jiang2012,Ruelle_diff_maps_erratum,Ruelle_diff_flow,Dolgopyat2004,Gouezel2006,Baladi2007,Hairer2010,Pollicott2016}.
The pathwise perturbation formula averages conventional adjoint formula over a lot of orbits, which is also known as the ensemble method or stochastic gradient method; however, it is cursed by the gradient explosion \cite{Lea2000,eyink2004ruelle,lucarini_linear_response_climate,lucarini_linear_response_climate2}. 
The divergence formula, also known as the transfer operator formula computes the perturbation operator, which is not pointwisely defined, and we have to partition the full phase space to obtain some mollified approximation: this is cursed by dimensionality \cite{Keane1998,Froyland2007,Liverani2001,Ding1994,Pollicott2000,Galatolo2014,Galatolo2014a,Wormell2019,Crimmins2020,Antown2022,Wormell2019a,Froyland2013a,SantosGutierrez2020,Bahsoun2018,Zhang2020,Pollicott2000}.
The kernel differentiation formula is also known as the likelihood ratio method in probability context \cite{Rubinstein1989,Reiman1989,Glynn1990,Hairer2010,np}; it works only for stochastic systems, and the cost is large when the noise is small.
In this paper, we consider only the deterministic flow with hyperbolicity.

The system dimension $M$ of most real-life dynamical systems is large; for example, a discretized 3-dimensional Navier-Stokes equation can easily have $M\ge10^6$.
The only way to overcome the curse of dimensionality is essentially the ergodic theorem: fix any $C^\infty$ observable function $\Phi:\cM \rightarrow \R$, then for almost all $x$ in the attractor basin $U$,
\[ \begin{split}
  \lim_{T\rightarrow \infty} \frac 1T \int_{t=0}^T \Phi (f^t x) 
  = \rho(\Phi)
  : = \int \Phi(x) \rho(dx).
\end{split} \]
This is both theoretically interesting and essential for high dimensional numeric since, roughly speaking, $\Phi$ reduces to a 1-dimensional random variable, and with some decay of correlations, we can ignore the details of the random variables when performing averages.
The trade-off here is that we have to confine our interest to the averaged observable, and we can not ask the details of $\rho$ at arbitrary locations.
By `ergodic theorem' type formulas, we refer to formulas that are averages of pointwise functions of a few vectors, which can be expressed by recursive relations on a orbit.
This paper provides such a formula for the linear response of hyperbolic flows with perhaps the least number of vectors being tracked.

A promising approach to obtain `ergodic theorem' type formula is blending the pathwise perturbation and divergence formulas, but the obstruction was that the split lacks smoothness.
It is natural to apply the pathwise perturbation formula to the stable or shadowing contribution of the linear response.
The formula tracking the least number of vectors is the nonintrusive shadowing formula, which involves only $u$ many vectors \cite{Ni_NILSS_JCP,Ni_nilsas,Ni_asl}.
The other such formulas are less efficient, and the earliest should be the stable part in the blended response formula, which evolves $M$ many vectors \cite{abramov2007blended}.

The unstable contribution is harder to sample on an orbit due to lack of regularities in directional derivatives.
When the ratio of unstable directions is low, with some additional statistical assumptions, the unstable contribution could sometimes be small, so we sometimes can still get a useful derivative even if we do not compute the unstable contribution \cite{Ruesha}.
Some pioneering works gave pointwisely defined formulas which are less well-known \cite{Ruelle_diff_maps_erratum,Gouezel2008}; but they still involve terms whose expressions are not obvious; moreover, they are not recursive; even if we use our tools to make those formulas recursive, the number of recursive relations would be large.
The equivariant divergence formula solved the regularity difficulty by evolving only $2u$ many vectors per step \cite{far,fr,TrsfOprt}.
But fast response does not yet have continuous-time versions.

The continuous-time cases are more important than discrete-time because most physical and engineering systems are continuous-time.
It is also more difficult due to the presence of the one-dimensional center direction parallel to the flow, where perturbations do not decay exponentially either forward or backward in time.
But we know the expression of the centre direction, and the stable and unstable are differentiable in the flow direction; these extra information allow us to assemble the shadowing, unstable, and the center contributions into formulas similar to discrete-time.

This paper does not fully solve the practical problem of computing an approximated and useful linear response for most high-dimensional continuous-time systems, but it should be a building block of a good solution.
In discrete time, it seems that the algorithm based on the equivariant divergence formula works whenever the existence of the linear response could be proved, including moderately nonuniform hyperbolic and moderately discontinuous case.
However, there are many counter examples where the linear response can be proved to not exist, and our algorithm explodes.
As for continuous time, most well-known systems, such as Lorenz systems, are not hyperbolic, and we guess that the linear response might not rigorously exist.
To remedy this, we have to introduce some approximation, and we proposed a trinity program in  \cite{Ni_asl}: for regions with bad hyperbolicity, we add local noise and switch to the kernel differentiation formula.

\subsection{Main results}
\hfill\vspace{0.1in}

This paper gives the equivariant divergence formula for the axiom A flow attractors, which is a recursive formula for perturbation of transfer operators of physical measures along center-unstable manifolds.
This further gives a recursive formula for the linear response, which has potential numerical applications.

For continuous-time, let $f$ be the flow of $F+\gamma X$, where both $F$ and $X$ are smooth, $F$ is the base flow, $X$ is the perturbation, and the parameter $\gamma$ has base value zero.
Let $X^{cu}$ and $X^s$ be center-unstable and stable part of $X$.
Let $\sigma$ be the conditional density function on of the unique physical SRB measure $\rho$ for a local center-unstable manifolds. Let $\tL^{cu}$ be the local transfer operator of $\xi\tf$, which is a map such that $\delta(\xi\tf)=X^{cu}$ (see \cref{s:cuL} for detailed definitions). 
So $\delta\tL^{cu}\sigma$ is the center-unstable perturbation of the transfer operator on $\sigma$. 
We take the ratio $-\frac{\delta\tL^{cu}\sigma}{\sigma}$, which will not depend on the choice of the neighborhood of  the local foliation. 
$-\frac{\delta\tL^{cu}\sigma}{\sigma}$ may as well be denoted by the unstable submanifold divergence, $\div^{cu}_\sigma X^{cu}$,
which has the equivariant divergence formula,

\begin{restatable}[equivariant divergence in continuous-time]{theorem}{goldbach}\label{t:eqdiv}
Let $\eta: = \eps^c(X)$, then
\[ \begin{split}
 - \frac{\delta \tL^{cu}\sigma} \sigma =  \div^{cu}_\sigma X^{cu} = \cS(\div^v \nabla F, \div^v F) X + \div^v X + F(\eta).
\end{split} \]
holds $\rho$ almost everywhere.
\end{restatable}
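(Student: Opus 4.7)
The plan is to prove the two equalities in the displayed identity in sequence, both by adapting the discrete-time equivariant divergence formula of \cite{far,fr,TrsfOprt} to the flow setting, with the $F(\eta)$ term being the new ingredient that accounts for the one-dimensional center direction parallel to $F$. Throughout, I would work on a fixed reference local center-unstable manifold and then check that the final expression is independent of this choice.

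For the first equality $-\delta\tL^{cu}\sigma/\sigma = \div^{cu}_\sigma X^{cu}$, the strategy is essentially a change-of-variables differentiation. Since $\xi\tf$ is constructed so that $\delta(\xi\tf) = X^{cu}$, pushing $\sigma$ under the one-parameter family $\xi\tf$ and differentiating at $\gamma = 0$ gives a leafwise Jacobian whose logarithmic derivative is minus the submanifold divergence of $X^{cu}$ weighted by $\sigma$. Dividing by $\sigma$ removes the weight, and the fact that the local foliation neighborhood drops out justifies naming $-\delta\tL^{cu}\sigma/\sigma$ as $\div^{cu}_\sigma X^{cu}$. This step should largely amount to invoking the local definitions in the promised \cref{s:cuL}.

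For the second equality, I would decompose $X = X^s + X^u + \eta F$, where $\eta F$ is the center part (so $\eta = \eps^c(X)$), and rewrite $\div^{cu}_\sigma X^{cu}$ as the center-unstable divergence of $X$ minus the contribution of $X^s$. The geometric heart of the argument is to replace the leafwise divergence $\div^{cu}_\sigma$, which depends on the measurable cu-distribution and the fractal density $\sigma$, by the ambient divergence $\div^v$ plus two corrections: first, an equivariant correction that tracks how the cu-projection evolves under the linearized flow, which is exactly what the shadowing operator $\cS$ applied to the data $(\div^v \nabla F, \div^v F)$ evaluates to when fed $X$; and second, a center correction, which becomes $F(\eta)$ because the center direction is spanned by $F$ and the only derivative of $\eta$ that is well defined pointwise on the attractor is the Lie derivative along the flow.

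The hard part will be justifying these manipulations pointwise $\rho$-almost everywhere on a fractal attractor. The conditional density $\sigma$ is only smooth along cu-leaves and merely measurable transversally, so every leafwise derivative must be interpreted inside a fixed leaf, and the $F(\eta)$ step must be treated with care since $\eta$ inherits only flow-direction regularity from the splitting. Establishing convergence of the series implicit in $\cS$ at $\rho$-typical points will require the uniform hyperbolicity of the axiom A attractor for both forward contraction of the stable part and backward contraction of the unstable part, and verifying that the three pieces recombine into the same scalar as the LHS independently of the chosen local foliation is the final bookkeeping obstacle.
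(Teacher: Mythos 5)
Your proposal gets the shape of the answer right (an adjoint-shadowing term plus a flow-derivative term), but it does not actually contain the mechanism that produces those terms, and it proposes to route through a quantity that the paper carefully avoids using as a starting point. Concretely:

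First, you propose to begin from the pointwise expression $\div^{cu}_\sigma X^{cu}$ and then "replace the leafwise divergence by the ambient divergence $\div^v$ plus two corrections." But $X^{cu}$ is only H\"older along the attractor, so its leafwise divergence is not a pointwise-computable object, and $\sigma$ is only leafwise differentiable; the paper explicitly flags this ("the main issue is that $X^{cu}$ is not differentiable") and for that reason does \emph{not} start from $\div^{cu}_\sigma X^{cu}$. Instead it starts from a pointwise volume-ratio representation of $\tL^{cu}\sigma/\sigma$ (Lemma~\ref{l:ty1}), which is built from pushforwards of smooth cubes and the holonomy map $\xi$. Differentiating that (Lemma~\ref{l:eps}) gives a concrete limit $\lim_T \eps^{cu}_T \nabla_{f_*^T X^s} e^{cu}(y_T)$, and all subsequent algebra acts on this limit, never on an ill-defined leafwise divergence of $X^{cu}$.

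Second, the step you gloss as "an equivariant correction that tracks how the cu-projection evolves, which is exactly what $\cS$ applied to $(\div^v\nabla F,\div^v F)$ evaluates to" is the entire content of the theorem, and your proposal gives no mechanism for it. In the paper this arises from a specific chain: the Hessian identity $\left(\nabla_X f_*^T\right)e = \int_0^T f_*^{T-\tau}\nabla^2_{f_*^\tau X, f_*^\tau e}F_\tau\,d\tau$ (Lemma~\ref{l:vari}) converts the $\nabla f_*^T$ terms in Lemma~\ref{l:star} into time-integrals of $\nabla^2 F$, and one must then recognize that the resulting pair of integrals over $(-\infty,0]$ and $[0,\infty)$, together with the $-\psi\eps^c$ term, is precisely the split-propagate expansion in characterization (b) of $\cS$, with $\om=\div^{cv}\nabla F$ and $\psi=\div^{cv}F$, after verifying $(\om,\psi)\in\cA$ via the compatibility $F(\psi)=\om(F)$ (equation~\eqref{e:ucsb}). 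None of this appears in your outline; asserting that the correction "is exactly what $\cS$ evaluates to" is restating the conclusion.

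Third, you do not address why the paper works with the center-unstable cube $e^{cu}=e\wedge F$ rather than the unstable cube $e$. This is not a stylistic detour: the volume-ratio argument needs the absolute continuity of the holonomy map (equation~\eqref{e:holo}), and the textbook version of that theorem requires the cube and the stable foliation to be transverse of complementary dimensions; $e$ alone is one dimension short, while $e^{cu}$ is exactly right (this is spelled out in the appendix discussion around~\eqref{e:ddd}). The final form with $\div^v$ and $F(\eta)$ is then obtained by algebraically peeling $F$ and $\eps^c$ off the cu-cubes in~\eqref{e:huasheng}, and the $F(\eta)$ term emerges from the identity $\eps^c(\nabla_F X - \nabla_X F)=\eps^c\cL_F X = F(\eta)$ after the forward/backward limits collapse the $\eps^c(\nabla f_*^T)$ terms. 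Your heuristic that $F(\eta)$ "must" be the center correction because only the flow-direction derivative of $\eta$ is pointwise defined is a plausibility argument, not a derivation, and it would not survive the bookkeeping: one must actually check that the four $\eps^c$-terms in~\eqref{e:huasheng} combine to exactly $\eps^c\cL_F X$ with no residue.

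So the gap is not in the final formula you wrote down but in the absence of the three load-bearing ingredients: the volume-ratio/holonomy representation of $\tL^{cu}\sigma/\sigma$, the Hessian expansion of $\nabla f_*^T$, and the identification of the resulting split-propagate structure with $\cS$. Without those, the proposal is a description of what the answer should be rather than a proof of it.
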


Here $\cS$ is the adjoint shadowing operator.
Roughly speaking it is the adjoint operator of the tangent shadowing operator $S$ under a special inner product which combine a (covector field, function) pair with a (vector field, function) pair: the detail is reviewed in \cref{s:asl}.
The tangent shadowing operator $S$ map a vector field to a (vector field, function) pair as below:
\[ \begin{split}
S: X \mapsto [v, \eta]
\quad \textnormal{where} \quad
\eta F=X^c, 
v=\int_0^\infty f_*^tX^s_{-t}dt-\int_{-\infty}^0f^t_*X^u_{-t}dt.
\end{split} \]
The equivariant divergence $\div^v$ is the contraction by the unstable vector and covectors; the functions $\div^v F,\div^v X$ is the equivariant divergence of the vector field $F,X$. The $\div^v \nabla F$ is a covector field defined similarly. 
All these notations will be explicitly defined later in \cref{s:UCnotations}.

\Cref{t:eqdiv} is an `ergodic theorem' type formula, which means that it can be calculated by the information on one typical orbit pointwise-recursively. 
The formula of this type is of great physical importance because in physical world we can only observe the phase space by one typical orbit. 
Also it is of application importance because the algorithm complexity of this type of formula can avoid the curse of dimensionality.

With \cref{t:eqdiv} , we can give a new formula for computing unstable contribution of linear response. 
Let $\rho$ be the SRB measure which coincide with the long-time statistic.
Fix a smooth objective function $\Phi$.
Denote $\delta:=\partial/\partial \gamma|_{\gamma=0}$, we decompose the linear response formula from \cite{Ruelle_diff_flow} into the shadowing and the unstable contribution,
\[ \begin{split}
  \delta \rho (\Phi)
  = \int_{0}^{\infty}\rho(X(\Phi_t))dt
  = SC - UC,
\end{split} \]
where $\Phi_t:=\Phi\circ f^t$, and
\[ \begin{split}
  SC :=\int_{0}^{\infty}\rho(X^{s}(\Phi_t))dt-\int_{-\infty}^{0}\rho(X^{cu}(\Phi_t))dt,
  \quad
  UC
  := \int_{-\infty}^\infty \rho\left(  X^{cu}(\Phi_t) \right) dt. 
\end{split} \]

\begin{restatable}[fast response formula for linear response]{proposition}{silverbach}\label{uc}
\[ \begin{split}
SC = \rho(X\cS(d\Phi,\Phi-\rho(\Phi))),
\quad
  UC = \lim_{W\rightarrow\infty} \rho(\phi \frac{\delta \tL^{cu}\sigma} \sigma),
  \quad\textnormal{where}\quad 
  \phi:=\int_{-W}^W \Phi\circ f^t dt .
\end{split} \]
\end{restatable}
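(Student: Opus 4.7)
The plan for \cref{uc} is to treat $SC$ and $UC$ separately: $SC$ reduces to an unpacking of the adjoint-shadowing pairing, while $UC$ reduces to a leaf-wise integration by parts using the disintegration of $\rho$ along the center-unstable foliation.

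For $SC$, I would split $X^{cu}=X^u+X^c$ to separate the ``vector slot'' and the ``function slot'' of $S(X)=[v,\eta]$. Flow-invariance of $\rho$ lets one shift time inside each integrand and recombine the stable-future and unstable-past pieces as
\[
\int_0^\infty\rho(X^s(\Phi_t))\,dt-\int_{-\infty}^0\rho(X^u(\Phi_t))\,dt=\rho(d\Phi(v)),
\]
where $v$ is the vector part of $S(X)$. For the center piece, using $X^c=\eta F$ with $\eta$ independent of $t$ and $F(\Phi_t)=\partial_t\Phi_t$ gives $\rho(X^c(\Phi_t))=\partial_t\rho(\eta\Phi_t)$, so that the fundamental theorem of calculus yields
\[
-\int_{-\infty}^0\rho(X^c(\Phi_t))\,dt=-\rho(\eta\Phi)+\lim_{T\to\infty}\rho(\eta\Phi_{-T})=-\rho(\eta(\Phi-\rho(\Phi)))
\]
by mixing of the SRB measure. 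Summing the two pieces and invoking the defining adjoint relation of $\cS$ with respect to the pairing that combines a (vector field, function) with a (covector field, function), one identifies the total as $\rho(X\cdot\cS(d\Phi,\Phi-\rho(\Phi)))$.

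For $UC$, Fubini rewrites $UC=\lim_{W\to\infty}\rho(X^{cu}(\phi))$ with $\phi=\int_{-W}^W\Phi\circ f^t\,dt$. Disintegrate $\rho$ along the center-unstable foliation, whose leaf density (with respect to Lebesgue) is $\sigma$; since $X^{cu}$ is tangent to the leaves, leaf-wise integration by parts gives
\[
\int X^{cu}(\phi)\,\sigma\,d\lambda=\int\phi\,\frac{\delta\tL^{cu}\sigma}{\sigma}\,\sigma\,d\lambda,
\]
which is essentially the defining property of $\delta\tL^{cu}\sigma/\sigma$ as the signed weighted leaf-divergence of $X^{cu}$ introduced in \cref{s:cuL}. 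Integrating transversely and passing to the limit completes the formula for $UC$.

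The main obstacles are analytic rather than algebraic. The mixing statement $\rho(\eta\Phi_{-T})\to\rho(\eta)\rho(\Phi)$, the Fubini interchange, and convergence of the improper integrals defining both $SC$ and $UC$ all rely on decay of correlations of the SRB measure, which fails for degenerate axiom~A flows (e.g., constant-roof suspensions) and must be imposed as a standing hypothesis. A second delicacy is the leaf-wise integration by parts: local center-unstable leaves carry boundary, but the local construction of $\tL^{cu}$ is designed precisely so that $\delta\tL^{cu}\sigma/\sigma$ is independent of the choice of local foliation and so that all boundary contributions cancel through the transverse integration.
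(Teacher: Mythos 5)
Your overall strategy matches the paper's, but with a different balance of detail in the two halves. For $SC$, the paper simply invokes characterization (a) of the adjoint shadowing lemma (\cref{t:ASF}) together with the identification $SC=\llangle S(X);d\Phi,\Phi-\rho(\Phi)\rrangle$ established in \cite{Ni_asl}; you instead rederive that identification from scratch by splitting $X^{cu}=X^u+X^c$, pushing forward along the orbit to recover $\rho(d\Phi(v))$, and handling the center piece with the fundamental theorem of calculus plus mixing ($\rho(\eta\Phi_{-T})\to\rho(\eta)\rho(\Phi)$). This is a correct and self-contained route that the paper omits, and your observation that exponential mixing is a standing hypothesis needed for the improper integrals to converge is apt (the paper does assume exponential mixing in \cref{t:ASF}).

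For $UC$, you take essentially the same route as the paper: disintegrate $\rho$ along the center-unstable foliation, integrate by parts leaf-wise to convert $X^{cu}(\Phi_t)$ into $\Phi_t\cdot\delta\tL^{cu}\sigma/\sigma$, and integrate over $t$ (you apply Fubini first to form $\phi$; the paper integrates by parts for each fixed $t$ and then integrates in $t$ — these are equivalent). The place you stop short is the boundary cancellation: you correctly flag it as the delicate point, but assert rather than argue that the flux terms cancel across the transverse integration. The paper's proof does more here: it builds an explicit finite partition $\{R_i\}$ of $K$ by rectangles $[A,B]$ from the local product structure, chooses the radii so that the rectangle boundaries have zero $\rho$-measure, and only then performs the integration by parts on $\bigcup_i R_i$ so that the flux terms match up across adjacent rectangle faces and the negligible boundary set contributes nothing. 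Without some such construction the cancellation claim is not justified, since different local charts come with different leaf normalizations; in a polished write-up you should either reproduce that partition argument or reduce to the one in the paper.
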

This gives a new interpretation of the unstable part of the linear response by $\delta\tL^u$. 
Recall that the physical measure can be sampled by an typical orbit, while \cref{uc} shows how to compute the linear response recursively on an orbit. 
Hence, roughly speaking, \cref{uc} is the ergodic theorem for linear response.

Using \cref{t:eqdiv} and \cref{uc}, the linear response can be expressed by recursively computing only $O(u)$ many vectors on one typical orbit.
In fact, in the real computing process, we can sample the physical measure $\rho$ by an orbit. Then we can compute a basis of the unstable subspace by pushing forward $u$ many randomly initialized vectors while performing occasional renormalizations, on the same orbit we used to sample $\rho$.
Similarly, we can compute a basis of the adjoint unstable subspace.
With these two basis we can compute the equivariant divergence $\div^v$.
Also these two basis are the main data required by the nonintrusive shadowing algorithm for computing $\cS$ (see \cref{s:asl}).

This paper is organized as follows.
First, \cref{s:prelim} reviews the tangent linear response theory and the recent adjoint shadowing lemma.
Then \cref{s:secdiv} proves the equivariant divergence formula.
Finally, \cref{s:UC} uses the equivariant divergence formula to calculate the linear response.

\section{Preliminaries}
\label{s:prelim}

\subsection{Hyperbolicity and linear response}
\hfill\vspace{0.1in}

Let $f$ be the flow of $F+\gamma X$, where both $F$ and $X$ are smooth, $F$ is the base flow, $X$ is the perturbation, and the parameter $\gamma$ has base value zero.
Assume that 
\[ \begin{split}
  F\neq 0 \quad \textnormal{on } K.
\end{split} \]
Assume that the attractor $K$ is compact, $|F|$ is uniformly bounded away from zero and infinity.
We further assume $K$ is a mixing axiom A attractor. Now $T_KM$ has a continuous $f_*$-invariant splitting into stable, unstable, and center subspaces, $T_KM = V^s \bigoplus V^u \bigoplus V^c$, where $V^c$ is the one-dimensional subspace spanned by $F$, and 
\[ \begin{split}
  \max_{x\in K}|f_* ^{-t}|V^u(x)| ,
  |f_* ^{t}|V^s(x)| \le C\lambda ^{n}
  \quad \textnormal{for  } t\ge 0, \\
  \max_{x\in K}|f_* ^{t}|V^c(x)| \le C
  \quad \textnormal{for  } t\in \R.
\end{split} \]
Here $f_*$ is the pushforward operator on vectors.
Define the oblique projection operators $P^u$ and $P^s$, such that
\[ \begin{split}
  v = P^u v + P^s v, \quad \textnormal{where} \quad P^u v\in V^u, P^s v\in V^s.
\end{split} \]

For an orbit $x_t=f^t(x_0)$, the so-called homogeneous tangent equation of $e_t\in T_{x_t}\cM$ can be written by three equivalent notations:
\[ \begin{split}
  e_t  =f_*^t e_0 
  \quad\Leftrightarrow\quad \cL_F e = 0
  \quad\Leftrightarrow\quad \nabla_F e = \nabla_e F.
\end{split} \]
Here $\cL$ is the Lie-derivative and $\nabla$ is the Riemannian derivative.
The last expression is an ODE since $\nabla_F$ is typically denoted by $\partial/\partial t$ in $\R^M$.

A mixing axiom A attractor admits a physical SRB measure $\rho$.
Existence of linear responses and the formula was proved \cite{Ruelle_diff_flow,Dolgopyat2004}. Denote
\[
\delta(\cdot):=\partial (\cdot)/\partial \gamma |_{\gamma=0}
\quad
\textnormal{is the total derivative with all other parameters fixed.}
\]
Here `total derivative' means when $g=g(\gamma,x(\gamma,y))$, then $\delta g = \partial g/\partial \gamma + \partial g/\partial x \cdot \partial x /\partial \gamma|_{\gamma=0}$.
We sometimes write $\delta(\cdot):=\partial (\cdot)/\partial \gamma |_{\gamma=0}$ to emphasize that all other parameters are fixed, while sometimes write $\delta(\cdot):=d (\cdot)/d \gamma |_{\gamma=0}$ to emphasize that we expand all levels of dependence on $\gamma$.
The linear response can be given by a pathwise perturbation formula,
\begin{equation} \begin{split} \label{e:meiguo}
  \delta \rho (\Phi)
  = \lim_{T\rightarrow\infty} \rho\left( \int_{0}^T X(\Phi_t) dt \right) ,
\end{split} \end{equation}
where $X(\cdot)$ is to differentiate in the direction of $X$.

In this paper we shall use the above formula, the so-called `ensemble formula', as a starting point, and derive a linear response formula that can be sampled by $2u$ recursive relations on an orbit.
First, we decompose the linear response into
\[ \begin{split} \label{e:ruelle22}
  \delta \rho (\Phi)  = SC + UC, 
  \quad
  SC=\int_{0}^{\infty}\rho(X^{s}(\Phi_t))dt-\int_{-\infty}^{0}\rho(X^{cu}(\Phi_t))dt
\end{split} \]
which we call the shadowing and the unstable contribution.
When $u\ll M$, it might happen that $\delta\rho\approx SC$ \cite{Ruesha}.

\subsection{Adjoint shadowing lemma}
\label{s:asl}
\hfill\vspace{0.1in}

The adjoint shadowing lemma give three equivalent characterizations for the adjoint shadowing operator $\cS$ in continuous-time \cite{Ni_asl}.
Let $f^*$ be the pullback operator on covectors.
The adjoint flor $\{f^{*t}\}_{t\in\R}$ is also hyperbolic.
More specifically, we can show that the dual space of $V^s, V^c$, and $V^u$, denoted by $V^{*s}, V^{*c}$, and $ V^{*u}$, are also the stable, unstable, and center subspace. 
Define the image space of $\cP^u$ and $\cP^s$ as $V^{u*}$ and $V^{s*}$; we can show that they are in fact the stable, center, and unstable subspaces for the adjoint system.

Denote $(\cdot)_t:=(\cdot)(f^tx)$.
Let $\eps^c$ be the covector in the center subspace such that $\eps^c (F)=1$.
Let $\cL_{(\cdot)}(\cdot)$ be the Lie derivative, and $\nabla_{(\cdot)}(\cdot)$ be the Riemannian covariant derivative. 
Define the derivative of vector field $F$ along a covector $\nu$ as
\[
\nabla_\nu F := \nabla_F\nu-\cL_F\nu,
\]
 so $\nabla_\nu F = -(\nabla F)^T \nu$ when $\cM=\R^M$.

We first define the tangent shadowing operator. 
Denote the space of Holder vector and covector fields by $\cX^\alpha$ and $\cX^{*\alpha}$; define the quotient space
\[ \begin{split}
  A(K):=\{ (v,\eta): v\in \cX^\alpha (K), \nabla_F v\in  \cX^\alpha (K), \eta\in C^\alpha(K) \} \, / \sim.
\end{split} \]
where the equivalent relation $\sim$ is defined as
\[ \begin{split}
(v_1,\eta_1)\sim (v_2,\eta_2)
\quad \textnormal{iff} \quad 
  \cL_F v_1 +\eta_1 F = \cL_F v_2 +\eta_2 F.
\end{split} \]
We define the tangent shadowing operator $S:\cX^\alpha(K)\rightarrow A(K)$ as the 
\[ \begin{split}
S: X \mapsto [v, \eta],
\quad \textnormal{where} \quad 
\cL_F v + \eta F
= X .
\end{split} \]
Here $[v,\eta]$ is the equivalent class of $(v,\eta)$ according to $\sim$.
Roughly speaking, $v$ is the location difference and $\eta$ is the time rescaling between two shadowing orbits. 
A particular $v$ and $\eta$ can be given by the expression:
$$
\eta F=X^c, 
v=\int_0^\infty f_*^tX^s_{-t}dt-\int_{-\infty}^0f^t_*X^u_{-t}dt. $$

Then we define $\cA$, the dual space  of $A$.
Let $F(\psi)$ be the derivative of $\psi$ along $F$; define the space of pairs of a covector field and a scalar function, 
\[ \begin{split}
  \cA(K):=\{(\om, \psi) \,|\, 
  \om\in \cX^{*\alpha}, 
  \psi\in C^\alpha, 
  F(\psi) \in C^\alpha(K), 
  F(\psi) = \om(F)\}.
\end{split} \]
Then we can show that, for $(\om, \psi)\in\cA(K)$, $\llangle S(X);\om,\psi\rrangle$ is well-defined, where
\[ \begin{split}
  \llangle v,\eta;\om,\psi\rrangle 
  := \rho (\om v) - \rho(\eta \psi).
\end{split} \]
Hence we can well-define the shadowing contribution $SC$ as
\[ \begin{split}
  SC :=\llangle S(X); d\Phi, \Phi -\rho(\Phi) \rrangle,
\end{split}\]
All these definitions can all be made pathwise.

\begin{theorem}[adjoint shadowing lemma for continuous-time \cite{Ni_asl}]
\label{t:ASF}
On a compact mixing axiom A attractor with physical measure $\rho$ and exponential mixing,
the adjoint shadowing operator $\cS: \cA(K) \rightarrow \cX^{*}(K)$ is equivalently defined by the following characterizations:
  \begin{enumerate}
  \item 
  $\cS$ is the linear operator such that
  \[ \begin{split}
  \llangle S(X); \om, \psi \rrangle =\rho(X \cS(\om, \psi))
  \quad \textnormal{for any }
  X\in \cX^\alpha,
  \end{split} \]
  Hence, if $X=\delta f$,
  $\nu =\cS (d\Phi, \Phi -\rho(\Phi))$, 
  then the shadowing contribution is
\[ \begin{split}
  SC =\llangle S(X); d\Phi, \Phi -\rho(\Phi) \rrangle
  =\rho(X \cS(d\Phi, \Phi-\rho(\Phi))) 
  = \lim_{T\rightarrow \infty} 
  \frac 1{T} \int_{0}^T  \nu_t X_t dt \,.
\end{split} \]
  \item 
  $\cS$ has the `split-propagate' expansion formula
\begin{equation*}
  \cS (\om,\psi ) 
  = \int_{t\ge 0} f^{*t} \om^s_t dt
  -  \int_{t\le 0} f^{*t} \om^u_t dt
  - \psi \eps^c,
\end{equation*}
\item 
  The shadowing covector $\nu = \cS(\om,\psi)$ is the unique solution of the inhomogeneous adjoint ODE,
  \[ \begin{split}
  \nabla_F \nu - \nabla_\nu F 
  = \cL_F\nu
  = - \omega
  \; \textnormal{ on  } K,
  \quad \textnormal{} \quad 
  \nu F (x)  = - \psi (x)
  \; \textnormal{ at all or any } x\in K.
  \end{split} \]
  \end{enumerate}
Moreover, $\cS$ preserves Holder continuity.
\end{theorem}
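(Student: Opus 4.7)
The plan is a cyclic equivalence: start with the explicit split-propagate formula (2), verify it satisfies the adjoint ODE (3), invoke uniqueness of (3) on the hyperbolic attractor, and then recognize (2) as the unique covector field satisfying the duality pairing (1).

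\emph{Step 1 ((2) $\Rightarrow$ (3)).} First I would check convergence: hyperbolicity of the dual cocycle $f^{*t}$ on $V^{*s}$ and $V^{*u}$ yields uniform exponential decay of the integrands, and H\"older continuity of $\omega$ carries over to $\cS(\omega,\psi)$ by standard estimates. Differentiating $\int_0^\infty f^{*t}\omega^s_t\,dt$ along $F$ with a shift of variable gives $\cL_F\bigl(\int_0^\infty f^{*t}\omega^s_t\,dt\bigr) = -\omega^s$, and similarly $\cL_F\bigl(-\int_{-\infty}^0 f^{*t}\omega^u_t\,dt\bigr) = -\omega^u$. For the center piece, I would verify $\cL_F\epsilon^c = 0$ (since $\epsilon^c(F)\equiv 1$ is flow-invariant and $\epsilon^c$ annihilates the $f^t_*$-invariant subspace $V^s\oplus V^u$); then $\cL_F(-\psi\epsilon^c) = -F(\psi)\epsilon^c = -\omega^c$ by the compatibility $F(\psi)=\omega(F)$ built into $\cA(K)$. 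Summing gives the ODE, and $\nu(F)=-\psi$ follows because $\omega^{s,u}(F)=0$.

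\emph{Step 2 (uniqueness in (3)).} Two solutions differ by a covector field $\mu$ on $K$ with $\cL_F\mu=0$ and $\mu(F)=0$, so the splitting $\mu = \mu^s + \mu^u$ is $f^{*t}$-invariant. Uniform boundedness on $K$ forces each component to vanish ($\mu^s$ would grow exponentially backward in time, $\mu^u$ forward), so (3) admits a unique solution, which coincides with (2).

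\emph{Step 3 ((2) $\Leftrightarrow$ (1), the main obstacle).} Substitute the representative $v = \int_0^\infty f^t_*X^s_{-t}\,dt - \int_{-\infty}^0 f^t_*X^u_{-t}\,dt$ and $\eta = \epsilon^c(X)$ into $\llangle S(X);\omega,\psi\rrangle = \rho(\omega v) - \rho(\eta\psi)$. By $f^t$-invariance of $\rho$, each $\rho(\omega(f^t_*X^s_{-t}))$ rewrites as $\rho((f^{*t}\omega^s)X)$: the components $\omega^u,\omega^c$ annihilate $V^s$, and $\omega^s$ annihilates the $f^t_*$-invariant $V^u\oplus V^c$, so only the $\omega^s$-on-$X$ term survives. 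After Fubini,
\[
\rho(\omega v) = \rho\Bigl(X\cdot\Bigl[\int_0^\infty f^{*t}\omega^s_t\,dt - \int_{-\infty}^0 f^{*t}\omega^u_t\,dt\Bigr]\Bigr),
\]
and $\rho(\eta\psi) = \rho(X\cdot \psi\epsilon^c)$. Combining identifies $\cS(\omega,\psi)$ as in (2). Nondegeneracy of the $\rho$-pairing over $X\in\cX^\alpha$ then shows $\cS$ is uniquely characterized by (1). The technical hurdle is justifying the Fubini interchange and the change-of-variables manipulations while $V^{s,u}$ are only H\"older continuous; this needs the uniform exponential bounds on the dual cocycle together with the exponential-mixing hypothesis to control the $\rho$-correlations, after which preservation of H\"older continuity of $\cS(\omega,\psi)$ is routine.
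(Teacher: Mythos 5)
The paper does not prove \cref{t:ASF}: it is stated as a citation of \cite{Ni_asl}, and \cref{s:asl} only reviews the statement and the surrounding definitions. So there is no proof here to compare your sketch against; what follows is a critique of the sketch on its own terms.

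Your cyclic scheme and Steps~1--2 are essentially right: the two integrals telescope under $\cL_F$ to $-\om^s$ and $-\om^u$; $\cL_F\eps^c=0$ because $f^{*t}\eps^c_t$ is again a covector in $V^{*c}$ pairing to $1$ with $F$, hence equals $\eps^c$; the compatibility $F(\psi)=\om(F)$ built into $\cA(K)$ converts the center piece to $-\om^c$; and uniqueness via boundedness against the forward/backward contraction of $f^{*t}$ on $V^{*u}$ and $V^{*s}$ is standard. Step~3 is where I would push back. You substitute the particular representative of $S(X)$ into the pairing and Fubini it apart; this proves the identity for that representative but does not show that $\llangle\,\cdot\,;\om,\psi\rrangle$ is well-defined on the quotient $A(K)$, which is implicitly needed for characterization~(1) to make sense as a defining property. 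There is a shorter route that also disposes of this: once you have~(3), so $\cL_F\nu=-\om$ and $\nu F=-\psi$, use invariance of $\rho$ to get $\rho(F(\nu v))=0$, apply Leibniz to obtain $\rho\bigl((\cL_F\nu)v\bigr)=-\rho(\nu\,\cL_F v)$, and then for \emph{any} representative with $\cL_F v+\eta F=X$,
\[
\rho(\om v)=\rho(\nu\,\cL_F v)=\rho(\nu X)-\rho(\eta\,\nu F)=\rho(\nu X)+\rho(\eta\psi),
\]
so that $\llangle S(X);\om,\psi\rrangle=\rho(\om v)-\rho(\eta\psi)=\rho(X\nu)$. This establishes~(1) and the well-definedness in one stroke, with no Fubini exchange and no analysis of how $\om^{s,u,c}$ interact with the dual splitting. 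Finally, you invoke exponential mixing in your narrative but never actually use it; the exchanges you do perform only need the exponential decay of $f^{*t}$ on $V^{*s}$ and $V^{*u}$, and the shorter route above needs even less.
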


Characterization (b) is the most powerful characterization for proving all properties of $\cS$.
In particular, the `split-propagate' scheme is a very common trick in hyperbolic systems to keep boundedness when pushing vectors or pulling covectors.
As we shall see, (b) appears in the unstable divergence, so we can apply the adjoint shadowing lemma.
On the other hands, characterization (a) shows that $SC$ can be expressed by $\cS$, and (c) is useful for numerically computing $\cS$ by the nonintrusive shadowing algorithm \cite{Ni_nilsas,Ni_NILSS_JCP,Ni_fdNILSS,Ni_CLV_cylinder}.

\section{Equivariant divergence formula in continuous-time}
\label{s:secdiv}

We derive a formula for the unstable contribution such that it can be sampled by $2u$ recursive relations on one orbit.

\subsection{Definitions and geometric notations}
\hfill\vspace{0.1in}
\label{s:UCnotations}

The unstable contribution is defined as the other part of the linear response, and in \cite{Ni_asl} we showed that
\begin{equation} \begin{split} \label{e:gaokao}
  UC := 
  \delta \rho(\Phi) - SC 
  = \int_{-\infty}^\infty \rho\left(  X^u(\Phi_t) \right) dt
  = \int_{-\infty}^\infty \rho\left(  X^{cu}(\Phi_t) \right) dt.
\end{split} \end{equation}
where $X^{cu}=X^c+X^u$. 
Without loss of generality, we can assume that $X^s \neq 0$ and $X^{cu} \neq 0$.
We use $\cV^{cu}(x)$ to denote the center-unstable manifold which contains $x$, and $\cV^{cu}_r(x)$ denotes the local center-unstable manifold of size $r$. 
The stable manifolds are $\cV^s(x)$, $\cV^s_r(x)$ and unstable are $\cV^u(x)$, $\cV^u_r(x)$.

Define the Hessian of a map, $\nabla f_*$, by the Leibniz rule, so for vector fields $X, Y, Z$, 
\[ \begin{split}
  (\nabla_{X} f_*) Y
  = \nabla_{f_* X } (f_*Y) - f_*\nabla_{ X} Y.
\end{split} \]
Denote the Hessian of vector fields by $\nabla^2$, so
\[ \begin{split}
  \nabla_X \nabla_Y Z
  = \nabla_{ \nabla_X Y} Z
  + \nabla^2_{X,Y} Z.
\end{split} \]
We can verify that the Hessians are symmetric,
\[ \begin{split}
  (\nabla_{X} f_*) Y 
  = (\nabla_{Y} f_*) X,
  \quad \textnormal{} \quad 
  \nabla^2_{X,Y} Z
  = \nabla^2_{Y,X} Z.
\end{split} \]

Denote a $u$-vector, which is basically a $u$-dimensional cube, by $ e:=e_1\wcw e_u $.
Recall that 
\[ \begin{split}
  \cL_Xe 
  := \sum_{i=1}^u e_1\wcw \cL_{X} e_i \wcw e_u.
\end{split} \]
Then we define some notations for derivatives, so that one slot in the derivative can take a cube, yet the rules still look like vectors.
\begin{equation} \begin{split}
\label{e:dengbing}
  \nabla_Xe 
  := \sum_{i=1}^u e_1\wcw \nabla_{X} e_i \wcw e_u ,
  \quad \textnormal{} \quad 
  \nabla_eX 
  := \sum_{i=1}^u e_1\wcw \nabla_{e_i}X \wcw e_u .
\end{split} \end{equation} 
So we still have $\cL_Xe = \nabla_Xe-\nabla_eX$.
Also, the Hessian of a map is
\[ \begin{split}
  (\nabla_{X} f_*) e 
  := \nabla_{f_* X } (f_*e) - f_*\nabla_{ X} e
  = \sum_{i=1}^u f_*e_1\wcw (\nabla_{X} f_*) e_i \wcw f_*e_u 
  \\
  = \sum_{i=1}^u f_*e_1\wcw (\nabla_{e_i} f_*) X \wcw f_* e_u
  =: (\nabla_{e} f_*) X .
\end{split} \]

Then we define Hessian of a vector field, where one slot takes a cube.
First recall that for 1-vector fields $X, Y$, and $Z$, we have
\[
\nabla^2_{X,Y} Z 
  := \nabla_X\nabla_Y Z - \nabla_{\nabla_XY}Z.
\]
Recursively apply \cref{e:dengbing}, also note that the wedge product is anti-symmetric, so interchanging two entries in a wedge product changes the sign, we have
\[ \begin{split}
  \nabla^2_{X,Y} e 
  := \nabla_X\nabla_Y e - \nabla_{\nabla_XY}e
  = \sum_{i\neq j}\cdots \nabla_Xe_i\cdots\nabla_Ye_j\cdots
  + \sum_i \cdots \nabla_X\nabla_Ye_i\cdots
  - \sum_i \cdots \nabla_{\nabla_XY}e_i\cdots 
  \\
  = \sum_i \cdots (\nabla_X\nabla_Ye_i -\nabla_{\nabla_XY}e_i) \cdots
  = \sum_i e_1\wcw \nabla^2_{X,Y}e_i \wcw e_u
  = \nabla^2_{Y,X} e .
\end{split} \]
\[ \begin{split}
  \nabla^2_{X,e} Y
  := \nabla_X\nabla_e Y - \nabla_{\nabla_Xe}Y
  = \sum_{i\neq j}\cdots \nabla_X e_j \cdots\nabla_{e_i} Y\cdots
  + \sum_i \cdots \nabla_X\nabla_{e_i}Y \cdots
  \\
  - \sum_{i\neq j}\cdots \nabla_Xe_i\cdots\nabla_{e_j}Y \cdots
  - \sum_i \cdots \nabla_{\nabla_X e_i} Y \cdots 
  = \sum_i e_1 \wcw \nabla^2_{X,e_i} Y \wcw e_u .
\end{split} \]
\[ \begin{split}
  \nabla^2_{e,X} Y
  := \nabla_e\nabla_X Y - \nabla_{\nabla_e X}Y
  = \sum_i \cdots \nabla{e_j} \nabla_{X} Y\cdots
  - \sum_i \nabla _{\cdots \nabla_{e_i} X \cdots} Y 
  \\
  = \sum_i \cdots \nabla{e_j} \nabla_{X} Y\cdots
  - \sum_{i\neq j} \cdots \nabla_{e_i} X \cdots \nabla_{e_j} Y \cdots
  - \sum_i \cdots \nabla _{\nabla_{e_i} X } Y \cdots
  \\
  = \sum_i \cdots \nabla{e_j} \nabla_{X} Y - \nabla _{\nabla_{e_i} X } Y\cdots
  = \sum_i \cdots \nabla^2_{e_i,X} Y \cdots
  = \nabla^2_{X, e} Y.
\end{split} \]
As expected, the Hessian of a map of a flow is related to the Hessian of the vector field that generates the flow.

\begin{lemma} [expression of $\nabla f_*$ by $\nabla^2 F$] \label{l:vari} Let $f^t$ be the flow generated by the vector field $F$, for any $x$, any $X\in T_x\cM$, any vector field $e$ defined at $x$ and differentiable along $X$,
\[ \begin{split}
  (\nabla_X f_*^t) e
  := \nabla_{f_*^t X} f_*^t e - f_*^t \nabla_Xe
  = \int_0^t f_*^{t-\tau} \nabla^2_{f_*^\tau X,f_*^\tau e} F_\tau d\tau.
\end{split} \]
\end{lemma}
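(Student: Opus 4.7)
The plan is to show that both sides of the claimed identity are the unique solution of a single linear inhomogeneous ODE along the orbit through $x$ with vanishing initial value at $t=0$, and then invoke uniqueness. Set $A(t) := (\nabla_X f_*^t) e$ and $W(t) := \int_0^t f_*^{t-\tau} \nabla^2_{f_*^\tau X,\, f_*^\tau e} F_\tau\, d\tau$. Both vanish at $t=0$: $A(0) = \nabla_X e - \nabla_X e = 0$, and $W(0) = 0$ trivially.

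The ODE for $W$ is immediate. Differentiating under the integral along the orbit produces an endpoint contribution $\nabla^2_{f_*^t X,\, f_*^t e} F$ and an interior contribution $\int_0^t \nabla_F\bigl(f_*^{t-\tau}(\cdot)\bigr)\, d\tau$. Since any vector of the form $f_*^{t-\tau} v$ satisfies the tangent equation $\nabla_F(f_*^{t-\tau} v) = \nabla_{f_*^{t-\tau} v} F$, the interior contribution is just $\nabla_W F$. Hence $\nabla_F W - \nabla_W F = \nabla^2_{f_*^t X,\, f_*^t e} F$.

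For $A$, set $Y(t) := f_*^t X$ and $E(t) := f_*^t e$. Each satisfies the tangent equation; $Y$ is realized as a local vector field by extending $X$ to some $\tilde X$ on a neighborhood of $x$ and setting $\tilde Y(y) := f_*^t\tilde X(f^{-t}y)$, so that $[F, Y] = \nabla_F Y - \nabla_Y F = 0$ along the orbit. Differentiating $A = \nabla_Y E - f_*^t \nabla_X e$ along $F$, the tangent equation for the pushed-forward vector $f_*^t \nabla_X e$ yields $\nabla_F(f_*^t \nabla_X e) = \nabla_{f_*^t \nabla_X e} F$. For $\nabla_F \nabla_Y E$, the Hessian-symmetry conventions of the paper together with $[F, Y] = 0$ allow the commutation $\nabla_F \nabla_Y E = \nabla_Y \nabla_F E$; applying the tangent equation for $E$ and unfolding the Hessian gives $\nabla_Y \nabla_E F = \nabla_{\nabla_Y E} F + \nabla^2_{Y, E} F$. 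Subtracting the pushforward piece and using the definition of $A$ yields exactly $\nabla_F A = \nabla_A F + \nabla^2_{f_*^t X,\, f_*^t e} F$. Thus $A$ and $W$ solve the same linear inhomogeneous ODE with the same initial condition, and uniqueness finishes the proof.

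The main obstacle, and the only nonroutine step, is the commutation $\nabla_F \nabla_Y E = \nabla_Y \nabla_F E$. It relies on two ingredients: the bracket identity $[F, Y] = 0$, which comes for free from the tangent equation, and the vanishing of the curvature contribution to $[\nabla_F, \nabla_Y]$, which is implicit in the symmetry of the vector-field Hessian asserted earlier in \cref{s:UCnotations} (and explicit whenever $\cM = \R^M$). A secondary bookkeeping point is the extension of pointwise data to local fields so that the second covariant derivatives above are well-defined; the canonical choice is simply to extend $X$ once and push it along the flow.
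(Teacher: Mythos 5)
Your proposal is correct and follows essentially the same route as the paper: both sides are shown to satisfy the linear inhomogeneous ODE $\cL_F v = \nabla^2_{f_*^t X, f_*^t e} F$ with $v_0 = 0$, and uniqueness closes the argument. The paper carries out the verification for $A$ by a chain of equalities unfolding the Hessian and applying the tangent equation for $f_*^t X$ and $f_*^t e$, which is precisely the content of your commutation step $\nabla_F\nabla_Y E = \nabla_Y\nabla_F E$ together with unfolding $\nabla_Y\nabla_E F$; the two presentations are equivalent, and you correctly note that the commutation rests on $[F,Y]=0$ and the paper's asserted Hessian symmetry.
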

\begin{proof}
Just check that both expressions solve the same ODE,
\[ \begin{split}
  (\cL_F v)_t = \nabla^2_{f_*^t X,f_*^t e} F_t,
  \quad \textnormal{} \quad 
  v_0 = 0. 
\end{split} \]
Let $v_t = \nabla_{f_*^t X} f_*^t e - f_*^t \nabla_Xe$ be the left side of the expression, then the Lie derivative of the second term along $F$ is zero, so
\[ \begin{split}
  \cL_{F_t} v_t
  = \cL_{F_t} \nabla_{f_*^t X} f_*^t e
  = \nabla_{F_t} \nabla_{f_*^t X} f_*^t e 
  - \nabla_{ \nabla_{f_*^t X} f_*^t e} F_t
  \\
  = \nabla^2_{F_t, f_*^t X} f_*^t e 
  + \nabla_{\nabla_{F_t} f_*^t X}  f_*^t e 
  - \nabla_{ \nabla_{f_*^t X} f_*^t e} F_t
  = \nabla^2_{F_t, f_*^t X} f_*^t e 
  + \nabla_{\nabla_{ f_*^t X} F_t}  f_*^t e 
  - \nabla_{ \nabla_{f_*^t X} f_*^t e} F_t
  \\
  = \nabla_{f_*^t X} \nabla_{F_t} f_*^t e 
  - \nabla_{ \nabla_{f_*^t X} f_*^t e} F_t
  = \nabla_{f_*^t X} \nabla_{f_*^t e} F_t 
  - \nabla_{ \nabla_{f_*^t X} f_*^t e} F_t
  = \nabla^2_{f_*^t X, f_*^t e} F_t 
\end{split} \]
Then let $v_t $ be the right side, $\int_0^t f_*^{t-s} \nabla^2_{f_*^s X,f_*^s e} F_s ds$, to verify the ODE.
\end{proof} 

The rules above are independent of the choice of $e$.
Later in this paper, we mostly use $e$ to denote the unstable cube, unless otherwise noted.
We also define co-cube $\eps$, the unit cubes $\te$ and $\teps$, and cubes in the center-unstable subspace,
\[ \begin{split}
  e:=e_1\wcw e_u,\quad
  \te := e/|e|, \quad
  e^{cu} := e\wedge F, \quad
  \te^{cu} := e^{cu}/|e^{cu}|, 
  \quad \textnormal{ where }
  e^i\in V^{u};
\\
  \eps := \eps^1 \wcw \eps^u, \quad 
  \teps := \eps/\eps(\te), \quad
  \eps^{cu} := \eps\wedge \eps^c, \quad
  \teps^{cu} := \eps^{cu}/|\eps^{cu}|, 
  \quad \textnormal{ where }
  \eps^i\in V^{u*}.
\end{split} \]
Here $\eps^c\in V^{*c}$ and $\eps^c F = 1$, $|\cdot|$ is the volume induced by $\ip{\cdot, \cdot}$, which is the inner product between $u$-vectors.

Define the equivariant unstable divergence as
\[ \begin{split}
  \div^v X:= \teps \nabla_\te X,
\end{split} \]
so $\div^v X$ is a real number.
Define $\div^v \nabla F$, which is a covector, as
\[ \begin{split}
  \div^v \nabla F(X) := \teps \nabla^2_{\te, X} F.
\end{split} \]
Similarly, we can define the center-unstable equivariant divergence as 
\[ \begin{split}
    \div^{cv} F := \teps^{cu}\nabla_{\te^{cu}} F\\
  \div^{cv} \nabla F(Y) 
  := \teps^{cu}\nabla^2_{Y,\te^{cu}}F  
\end{split} \]
The derivative in this divergence hits only smooth quantities such as $X$ and $\nabla F$, so it is well-defined as a normal function.
The equivariant divergence is Holder continuous since $e$ and $\eps$ are Holder.

\subsection{Center-unstable transfer operator \texorpdfstring{$\tL^{cu}$}{Lcu}}
\hfill\vspace{0.1in}
\label{s:cuL}

We present the proof of theorem~\ref{t:eqdiv} by first deriving an equivariant divergence formula using $e^{cu}$.
This derivation is longer than that in the appendix, but here we only need to use textbook theorems.
Compared with the discrete-time case \cite{TrsfOprt}, the main difficulty here is to handle the extra center direction.

First note that if $x\in K$ then $\cV^{cu}(x)\subseteq K$. 
This can be deduced from considering the time-one map $f^1$ of flow $F$. 
$T_KM$ has a $f^1_*$-invariant splitting $V^s\bigoplus V^c\bigoplus V^u$ where $V^s$ is uniformly contracting, $V^u$ is uniformly expanding and $V^c$ is the flow direction. 
Particularly, $K$ is a partially hyperbolic attractor of $f^1$.
It is a textbook theorem that under such condition we have $\cV^u(x) \subseteq K$ if $x\in K$, where $\cV^u(x)$ is the unstable manifold of  both  $f^1$ and $F$. 
Further use $K$ is invariant under the flow, we can get $\cV^{cu}(x)\subseteq K$ if $x\in K$.

The perturbation $X^{cu}$ can be written as $X-X^s$.
If we change the measure on a center-unstable manifold $\cV^{cu}$ by the flow of $X^{cu}$ for a distance of $\gamma$, then this perturbation roughly equals to change the measure by $\xi\circ \tf$, where $\tf$ is the flow of $X$ for a distance of $\gamma$, and $\xi$ is the holonomy map projecting along the stable manifolds onto $\cV^{cu}$.
More precisely, $\delta(\xi\tf)=X^{cu}$, so the linear response caused by $X^{cu}$ is the same as $\delta(\xi\tf)$.
See \cref{f:alau} for pictorial explanations.
The following lemma, from \cite{TrsfOprt}, shows that the map $\xi\tf$ is locally well-defined.

\begin{lemma}[$\xi\tf$ is locally well defined  \cite{TrsfOprt}]\label{ty:2}
Given $r>0$ small enough, $\exists \gamma_0>0$ such that for any $|\gamma|<\gamma_0$ and any $x\in K$, the point $y=\xi\tf x$ uniquely exists, and $|x-y|<0.1r$. 
From now on, we always assume $|\gamma|<\gamma_0$. 
\end{lemma}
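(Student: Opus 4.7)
The plan is to combine two ingredients: first, that the auxiliary flow $\tf$ moves every point of $K$ by an amount $O(\gamma)$; second, that an axiom A attractor admits a uniform local product structure, so that the holonomy $\xi$ along stable leaves onto $\cV^{cu}(x)$ is defined on a neighborhood of $x$ whose size can be chosen uniformly in $x\in K$.

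First I would bound $|\tf x - x|$. Since $X$ is smooth and $K$ is compact, $|X|$ is uniformly bounded on a fixed neighborhood of $K$; integrating the ODE $\dot y = X(y)$ for time $\gamma$ starting at $x\in K$ gives $|\tf x - x|\le C_X|\gamma|$ with $C_X$ independent of $x$. So by choosing $\gamma_0$ small I can guarantee $|\tf x - x|<\delta$ for any prescribed $\delta>0$, uniformly on $K$.

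Next I would invoke the standard local product structure for the mixing axiom A attractor $K$. There exists $r_0>0$ (depending only on $K$) and a uniform $\delta_0>0$ such that whenever $x\in K$ and $z\in \cM$ satisfy $|x-z|<\delta_0$, the local center-unstable manifold $\cV^{cu}_{r_0}(x)$ and the local stable manifold $\cV^s_{r_0}(z)$ intersect in exactly one point, which depends continuously (in fact H\"older continuously) on $x$ and $z$. This is textbook for hyperbolic sets once the center-unstable direction is included, and uses only that $K$ is a compact partially hyperbolic attractor of the time-one map together with the fact, already noted in the paper, that $\cV^{cu}(x)\subseteq K$ for $x\in K$. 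Thus for $r<r_0$ and $z=\tf x$ the point $y:=\cV^s_{r}(\tf x)\cap \cV^{cu}_{r}(x)=\xi\tf x$ is uniquely defined.

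Finally I would check the distance bound. By continuity of the local product map at the diagonal, there is a modulus of continuity $\om(\delta)\to 0$ as $\delta\to 0$, uniform in $x\in K$, such that $|x-y|\le \om(|\tf x - x|)$. Choosing $\gamma_0$ so small that $C_X\gamma_0$ is below the threshold at which $\om$ is less than $0.1r$ completes the proof. The only subtle point is the uniformity of all constants in $x\in K$, which is the main potential obstacle; this is however guaranteed by compactness of $K$ together with the uniform size and H\"older dependence of local stable and center-unstable manifolds on an axiom A attractor.
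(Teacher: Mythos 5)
The paper does not prove this lemma; it cites it from \cite{TrsfOprt}, so there is no in-paper argument to compare against. Your proposal is the expected route and is essentially correct: bound $|\tf x - x|\le C_X|\gamma|$ by compactness of $K$ and smoothness of $X$, then use the uniform local product structure to obtain the unique transversal intersection $y=\cV^s_{loc}(\tf x)\cap\cV^{cu}_{loc}(x)$, and finish with the uniform modulus of continuity of the bracket map at the diagonal to get $|x-y|<0.1r$ for $\gamma_0$ small. The one point worth making explicit is that $\tf x$ generally lies off $K$, so in writing $\cV^s_{r_0}(z)$ for $z=\tf x\in\cM$ you are implicitly invoking the strong stable lamination on a full neighborhood of the attractor (i.e., that $\{\cV^s(p)\}_{p\in K}$ laminates the basin, so a nearby $z$ lies on a unique local stable leaf that meets $\cV^{cu}_{r_0}(x)\subseteq K$ transversally in one point), rather than the product structure restricted to $K\times K$. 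This extension is standard for Axiom A attractors, and, as you note, compactness of $K$ delivers the uniformity of $r_0$, $\delta_0$, and the modulus of continuity needed to close the argument.
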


Now we define the conditional measure density $\sigma$ and the transfer operator $\tL^{cu}$. First we choose $r>0$ small enough such that lemma \ref{ty:2} holds and for each $x\in K$ its neighbourhood $B(x,r)\bigcap K$ has an local foliation by local unstable-center manifolds.
Since the physical SRB measure $\rho$ is absolutely continuous alone the unstable-center foliation, 
we can define $\sigma$ as the density of the conditional measure on a local unstable-center foliation; 
$\sigma$ is at least $C^1$ on a unstable leaf.

Define $\tL^{cu}$ as the local transfer operator of $\xi\tf:\cV^{cu} \rightarrow \cV^{cu}$; note that $\tilde\cdot$ indicates dependence on $\gamma$.
Let $r$ small enough as above; for each $x\in K$,
let $P:=(\xi\tf)^{-1}\cV^{cu}_{0.1r}(x)$,
we define $\tL^{cu}$ as the transfer operator from $C^0(P)$ density function space to $C^0(\mathcal{V}_{0.1r}^{cu}(x))$.
Since $\delta(\xi \tf) = X^{cu}$, $\delta \tL^{cu}$ is the perturbation by $X^{cu}$.
Let $y_{-\Delta T} = (\xi\tf)^{-1} x$ (the subscript $\Delta T$ will be defined later), then the pointwise definition of $\tL^{cu}$ on any density function $\sigma$ is
\begin{equation} \begin{split} \label{e:tlsig}
  \tL^{cu}\sigma (x)
  := \frac {\sigma} { |\xi_* \tf_{*}| } (y_{-\Delta T})
  = \frac {\sigma (y_{-\Delta T})} { |\tf_{*}(y_{-\Delta T})| \, |\xi_*(\tf y_{-\Delta T})| },
\end{split} \end{equation}
and we can interpret the last equality as dissecting $X^{cu}$ into $X$ and $-X^s$.
Equivalently, fix the factor measure and the foliation of $\cV^{cu}_{0.1r}(x)$, then $\tL^{cu}\sigma$ is the conditional density of the new measure obtained by $\xi_* \tf_{*}$-pushforward the old measure on $B(x,r)$.

Then we can define $\frac{\tL^{cu}\sigma}{\sigma} (x)$ and $\frac{\delta\tL^{cu}\sigma}{\sigma} (x)$.
Notice that the domain of $\sigma$ includes $\mathcal{V}_{r}^{cu}(x)$, and $P:=(\xi\tf)^{-1}\cV^{cu}_{0.1r}(x)\subset \cV^{cu}_r(x) \subset K\cap B(x,r)$, so we can define $\frac{\tL^{cu}\sigma}{\sigma}(x)$ on the smaller leaf $\cV^{cu}_{0.1r}(x)$ for the particular $r$.
Moreover, notice that both $\sigma|_{\cV_{0.1r}^{cu}(x)}$ and $\sigma|_P$, the source of $\tL^{cu} \sigma|_{\cV_{0.1r}^{cu}(x)}$, are restrictions of the same $\sigma$ from the same larger leaf $\cV_{r}^{cu}(x)$ of the same foliation.
Hence, we can expect that, in $\frac{\tL^{cu}\sigma}{\sigma} (x)$, the factor due to the selection of $B(x,r)$ would cancel.
Notice that $\sigma$, $\tL^{cu}\sigma$ and $\delta \tL^{cu}\sigma$ may differ by a constant factor, depending on the choice of the neighborhood of the  local foliation, so they can only be defined locally;
but the ratio $\frac{\tL^{cu}\sigma}{\sigma}$ and $ \frac{\delta \tL^{cu} \sigma}{\sigma}$ does not depend on that choice, since the constant factors are cancelled, so they are globally well-defined.

Fix the point of interest $x$ and denote $x_t:=f^tx$.
We define $y_t$ as the pseudo-orbit shadowing $x_t$, with a discontinuity $\tf$ at $t=0$ when $\gamma \neq 0$.
More specifically,
\[ \begin{split}
y_0:=\xi^{-1}x,
\quad
y_t:=f^t y_0 
\quad\textnormal{for}\quad
t\ge0,
\\
y_{-\Delta T}:=\tf^{-1}y_0,
\quad
y_{t}=f^{t+\Delta T}(y_{-\Delta T})
\quad\textnormal{for}\quad
t<0,
\end{split} \]
where $\Delta T$ is such that $f^{\Delta T}(y_{-\Delta T})\in \cV^{u}_{r}(x)$. 
By the invariance of stable and unstable manifolds, we have $y_t\in \cV^s(y_t)$ for $t\ge0$ and $y_t\in \cV^u(x_t)$ for $t\le 0$.
Note that $y_t$ depends on $(\gamma,t)$: 
when $t<0$, and $(\gamma,t)$ change sightly, $y_t$ form a small neighbourhood in a (at least $C^1$) two-dimensional submanifold of $\cV^{cu}(x)$; 
when $t>0$, $y_t$ form a small submanifold of $\cV^{cs}(x)$. 
Above definitions are illustrated in \cref{f:ty}.

\begin{figure}[ht] \centering
  \includegraphics[width=0.7\textwidth,center]{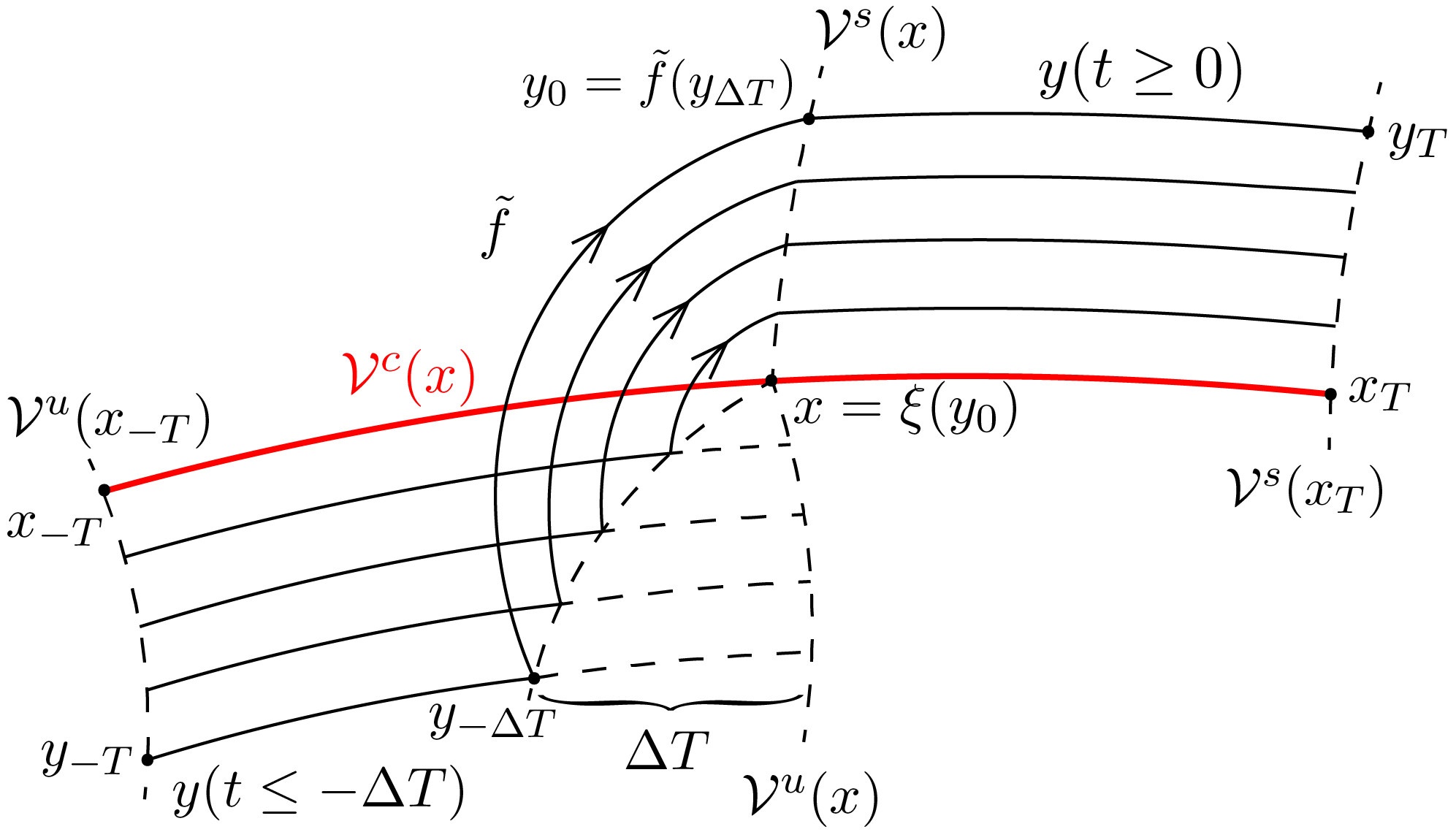}
  \caption{
  Pseudo-orbit $y_t$ for different $\gamma$. 
  }
  \label{f:ty}
\end{figure}

We define the vector field $e(y_{t})$ as
\[ \begin{split}
e(y_t):=
\begin{cases}
  f_*^{t+T} \te(y_{-T}), \quad\text{if } t<0;
  \\
  f_*^{t}\tf_*f_*^{T-\Delta T} \te(y_{-T}), \quad\text{if } t\ge0.
\end{cases}
\end{split} \]
When we use this notation in the following sections, $T$ is a fixed big number.
Similarly we can define $e^{cu}(y_t)$ as
\[ \begin{split}
e^{cu}(y_t):=
\begin{cases}
  f_*^{t+T} \te^{cu}(y_{-T}), \quad\text{if } t<0;
  \\
  f_*^{t}\tf_*f_*^{T-\Delta T} \te^{cu}(y_{-T}), \quad\text{if } t\ge0.
\end{cases}
\end{split} \]
When $\gamma=0$, we have $\tf=I_d,\tf_*=I_d$ and $\Delta T=0$. 
Denote $e^{cu}_t:= f_*^{t+T} \tilde e^{cu}(x_{-T})$ and $\eps^{cu}_t:=f^{*T+t} \teps^{cu}(x_{-T})$ for convenience, 
so $e^{cu}:=e^{cu}_0=e^{cu}(y_{-\Delta T})|_{\gamma=0}$ and $\eps^{cu}=\eps^{cu}_0$ .

\begin{figure}[ht] \centering
  \includegraphics[width=\textwidth,center]{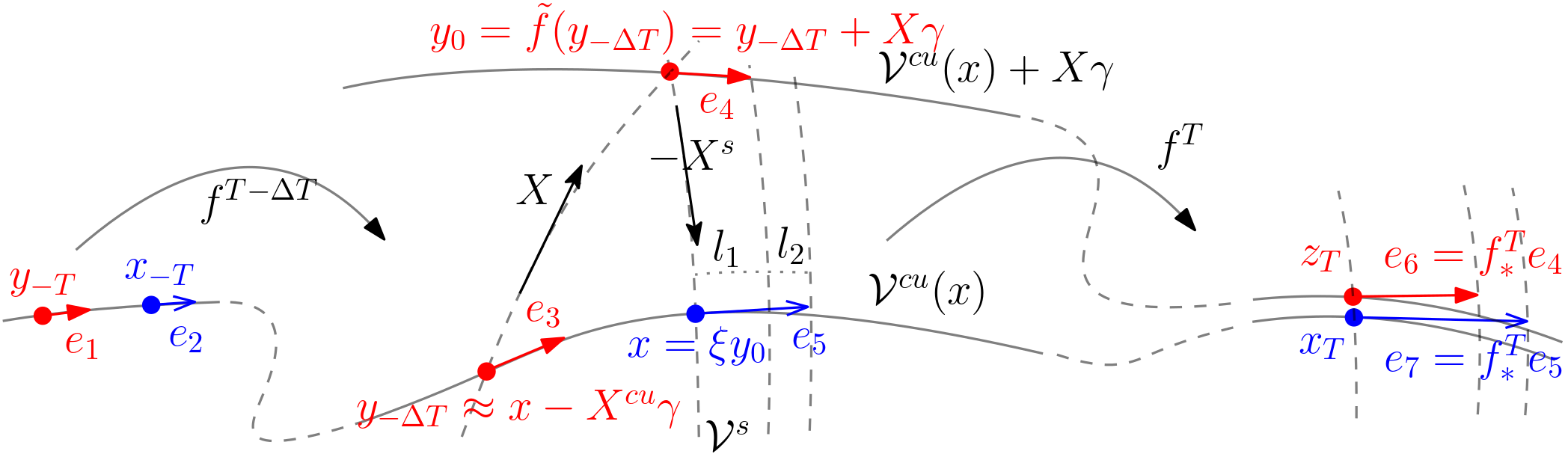}
  \caption{Measure transfer on $\cV^{cu}$.
  Here $y+X\gamma$ means to start from $y$ and flow along the direction of $X$ for a length of $\gamma$.
  We use $e$ as short of $e^{cu}$, and $|e_1| = |e_2|$.
  Roughly speaking, 
  $\delta \tL^u \sigma/\sigma (x) = l_2/l_1=(e_7-e_6)/e_7$.
  }
  \label{f:alau}
\end{figure}

Note that SRB is both invariant and also the pushforward of Lebesgue; it is nontrivial for the two definitions to coincide \cite{srbflow}.
If we accept both properties of SRB, then the conditional SRB measure on $\cV^{cu}(x)$, close to $x$, is obtained by pushing forward a $u+1$ dimensional Lebesgue measure on $\cV^u(f^{-T}x)$ for large $T$.
With this, we give a volume ratio formula for $\frac{\tL^{cu}\sigma}{\sigma}$, which solidifies the intuition in the caption of \cref{f:alau}.

\begin{lemma}[one volume ratio]\label{l:ty1}
Fix $x$ and the conditional density $\sigma$,  Let $\tL^{cu}$ be the transfer operator of $\xi\tf$, then we have the expression
\[ \begin{split}
  \frac{\tL^{cu}\sigma}{\sigma} (x) 
  = \lim_{T\rightarrow\infty} \lim_{T'\rightarrow\infty} 
  \frac{|f_*^{T'+2T}\te^{cu}(x_{-T})|}
  {|f_*^{T'}f_*^{T}\tf_*f_*^{T-\Delta T}\te^{cu}(y_{-T})|}.
\end{split} \]
\end{lemma}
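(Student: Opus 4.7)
The proof rests on two standard ingredients: the definition of the transfer operator $\tL^{cu}$, and the characterization of the conditional SRB density on center-unstable leaves as a pushforward of Lebesgue from the far past, as highlighted in the paragraph immediately above the lemma.

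I would start by rewriting the left-hand side directly from the definition \eqref{e:tlsig},
\[
\frac{\tL^{cu}\sigma(x)}{\sigma(x)}
=\frac{\sigma(y_{-\Delta T})}{\sigma(x)\,|\tf_*(y_{-\Delta T})|\,|\xi_*(y_0)|},
\]
so that the task reduces to matching this with the stated ratio of pushforward-cube volumes. The next step is to invoke the SRB-pushforward property: for any $z\in K$,
\[
\sigma(z)\;\propto\;\frac{1}{|f_*^{T_0}\te^{cu}(f^{-T_0}z)|}
\]
up to a leaf-dependent normalization, since $|f_*^{T_0}\te^{cu}(f^{-T_0}z)|$ is exactly the $(u+1)$-volume of the pushed-forward unit center-unstable cube.

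I would apply this characterization to $z=x$ with $f^{-T_0}z=x_{-T-T'}$, and to $z=y_{-\Delta T}$ with $f^{-T_0}z=y_{-T-T'}$. By the construction of the pseudo-orbit $y_t$, for $t\le -\Delta T$ one has $y_t\in\cV^u(x_t)\subset\cV^{cu}(x_t)$; in particular $x_{-T-T'}$ and $y_{-T-T'}$ sit on a common center-unstable leaf, so the two leaf-normalizations agree and cancel in the ratio. Pushing the $y$-side cube forward through $f_*^{T-\Delta T}$, then $\tf_*$, then $f_*^{T+T'}$ reproduces the denominator in the lemma, while the $x$-side furnishes the numerator. The factor $|\tf_*(y_{-\Delta T})|$ in the rewritten LHS is absorbed by the $\tf_*$ step, and the remaining $|\xi_*(y_0)|$ is accounted for by the fact that the $x$- and $y$-pushforwards end on different center-unstable leaves (at $x_{T+T'}$ and $y_{T+T'}$) related by stable holonomy, via the absolute-continuity identity
\[
|\xi_*(y_0)|=\lim_{T'\to\infty}\frac{|f_*^{T'}\te^{cu}(y_0)|}{|f_*^{T'}\te^{cu}(x)|},
\]
which supplies exactly the missing factor to reconcile the two sides.

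The main obstacle lies in justifying the two successive limits and handling the stable-holonomy contribution cleanly. This requires the standard bounded-distortion estimates along both the $x$- and $y$-orbits, together with the observation that after the $\tf_*$ step the cube at $y_0$ is no longer tangent to $\cV^{cu}(y_0)$; only the projection of that cube onto $V^{cu}(y_0)$ survives repeated forward pushforward, and one must verify that the associated correction factors (in particular the $|F|$-factors arising from passing between $\te^u$ and $\te^{cu}$) cancel consistently between numerator and denominator. All these steps are standard for mixing axiom~A flows but have to be threaded together carefully.
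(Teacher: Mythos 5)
Your plan follows essentially the same route as the paper: start from the pointwise definition of $\tL^{cu}$, express the conditional SRB density via pushforward volumes of $\te^{cu}$-cubes from the far past, and use the absolute continuity of the stable holonomy to account for the $|\xi_*|$ factor, reconciling the resulting limits along the $x$- and $y$-pseudo-orbits. The only cosmetic difference is that you collapse the paper's two-step density comparison (first $y_{-\Delta T}$ versus $x_{-\Delta T}$ on a common unstable leaf via the Ruelle-type limit, then $x_{-\Delta T}$ versus $x$ along the flow via SRB invariance) into a single ``$\sigma(z)\propto 1/|f_*^{T_0}\te^{cu}(f^{-T_0}z)|$ up to leaf normalization'' statement, and you phrase the holonomy Jacobian as a ratio of pushforward volumes of unit cubes at $y_0$ and $x$ rather than the paper's form $\lim_{T'}|f_*^{T'}\xi_*\tf_*e^{cu}|/|f_*^{T'}\tf_*e^{cu}|=1$; both are equivalent.
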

\begin{proof}
First, recall that for SRB measures, for $y_{-\Delta T}\in \cV^{u}_r(x_{-\Delta T})$, the density $\sigma$ satisfies
\begin{equation} \begin{split} \label{e:srbsig}
\frac{\sigma(x_{-\Delta T})}{\sigma(y_{-\Delta T})} 
=
\lim_{t \rightarrow \infty} 
\frac
{\left|f_{*}^{t} \te^{cu}(f^{-t} y_{-\Delta T}) \right|}
{\left|f_{*}^{t} \te^{cu}(f^{-t} x_{-\Delta T}) \right|}.
\end{split} \end{equation}
This expression was stated and proved for example in \cite[proposition 1]{Ruelle_diff_maps_erratum} using unstable Jacobians.
Intuitively, this can be obtained by considering how the Lebesgue measure on $\cV^{cu}(x_{-t})$ is evolved.
The mass contained in the cube $\te^{cu}_{-t}$ is preserved via pushforwards, but the volume increased to $f_*^t \te^{cu}_{-t}$.
Hence, 
\[\begin{split}
\frac{\sigma(x_{-\Delta T})}{\sigma(y_{-\Delta T})} =\frac
{\left|f_{*}^{t} \te^{cu}(f^{-t} y_{-\Delta T}) \right|}
{\left|f_{*}^{t} \te^{cu}(f^{-t} x_{-\Delta T}) \right|}
\cdot \frac{\sigma(f^{-t}x_{-\Delta T})}{\sigma(f^{-t}y_{-\Delta T})}
\end{split}\]
Note that the conditional measure is determined up to a constant coefficient, which is canceled in the ratio.
Since $\lim_{t \rightarrow \infty} d(f^{-t}x_{-\Delta T},f^{-t}y_{-\Delta T})=0$, $\sigma$ is positive and continuous and $K$ is compact, so $\lim_{t \rightarrow \infty} \frac{\sigma(f^{-t}x_{-\Delta T})}{\sigma(f^{-t}y_{-\Delta T})}=1$ as $t\rightarrow\infty$, yielding \cref{e:srbsig}.

Then we consider the ratio $\frac{\sigma(x)}{\sigma(x_{-\Delta T})}$, where $x_{-\Delta T}:=f^{-\Delta T}x$.
Note that here the difference between $x$ and $x_{-t}$ is along the center direction.
In this case, due to the invariance of the SRB measure, we have
$$
\frac{\sigma(x)}{\sigma(x_{-\Delta T})}
=\frac{|f^{\Delta T}_*e^{cu}(x_{-\Delta T})|}{|e^{cu}(x_{-\Delta T})|}
=:|f^{\Delta T}_*(x_{-\Delta T})| ,
$$
for any $e^{cu}(x_{-\Delta T})$.
Combining with \cref{e:srbsig}, we get
\[
\frac{\sigma(y_{-\Delta T})}{\sigma(x)}
= \frac{\sigma(x_{-\Delta T})}{\sigma(x)}
  \cdot\frac{\sigma(y_{-\Delta T})}{\sigma(x_{-\Delta T})}
=  \lim_{T\rightarrow\infty}  
  \frac{|f_*^T\te^{cu}(x_{-T})|}
  {|f_*^{T-\Delta T}\te^{cu}(y_{-T})| }.
\]

Substituting into \cref{e:tlsig}, we get
\[ \begin{split}
  \frac{\tL^{cu}\sigma}{\sigma} (x) 
  =  \lim_{T\rightarrow\infty}  
  \frac{|f_*^T \te^{cu}(x_{-T})|}
  {|f_*^{T-\Delta T}\te^{cu}(y_{-T})| }
  \frac{1}{|\tf_{*}(y_{-\Delta T})| \, |\xi_*(\tf (y_{-\Delta T}))|}
  .
\end{split} \]
Here $|\tf_{*}(y_{-\Delta T})| := \frac{|\tf_* e^{cu}(y_{-\Delta T})|}{|e^{cu}(y_{-\Delta T})|}$, $|\xi_{*}(\tf(y_{-\Delta T}))| := \frac{|\xi_*\tf_* e^{cu}(y_{-\Delta T})|}{|\tf_*e^{cu}(y_{-\Delta T})|}$,
where $\tf_* e^{cu}(y_{-\Delta T})$ is a vector at $y_0=\tf y$, $\xi_*\tf_* e^{cu}(y_{-\Delta T})$ is a vector at $x$.

Then we give an expression of $|\xi_*|$.
By a corollary of the absolute continuity of the holonomy map \cite[theorem 4.4.1]{barreirapesin},
\begin{equation} \begin{split} \label{e:holo}
 \lim_{T'\rightarrow\infty}
\frac {|f^{T'}_* \xi_*\tf_* e^{cu}(y_{-\Delta T}) |}
{ |f_*^{T'} \tf_* e^{cu}(y_{-\Delta T})|}
= 1. 
\end{split} \end{equation}
Hence,
\begin{equation} \begin{split} 
\frac {1} {|\xi_*(\tf y_{-\Delta T})|} 
:= \frac {|\tf_* e^{cu}(y_{-\Delta T})|} {|\xi_*\tf_* e^{cu}(y_{-\Delta T})|} 
= \lim_{T'\rightarrow\infty}
\frac {|\tf_* e^{cu}(y_{-\Delta T})|} { |f_*^{T'} \tf_* e^{cu}(y_{-\Delta T})|}
\frac {|f^{T'}_* \xi_*\tf_* e^{cu}(y_{-\Delta T}) |} {|\xi_* \tf_* e^{cu}(y_{-\Delta T})|} .
\end{split} \end{equation}
By substitution and cancellation,
\[ \begin{split}
  \frac{\tL^{cu}\sigma}{\sigma} (x) 
  =  \lim_{T\rightarrow\infty}\lim_{T'\rightarrow\infty}
  \frac{|f_*^Te^{cu}(x_{-T})|}{|f_*^{T-\Delta T}e^{cu}(y_{-T})|  } 
  \frac{|e^{cu}(y_{-\Delta T})|}{|f_*^{T'} \tf_*e^{cu}(y_{-\Delta T})|} 
  \frac{|f_*^{T'}\xi_*\tf_* e^{cu}(y_{-\Delta T})|}{|\xi_*\tf_* e^{cu}(y_{-\Delta T})|} 
  \end{split}
  \]

To simplify the above expression, notice that both $f_*^T e^{cu}(x_{-T})$ and $\xi_*\tf_*e^{cu}(y_{-\Delta T})$ are in the one-dimensional subspace $\wedge^{cu} V^{cu}(x)$, so the growth rate of their volumes are the same when pushing forward by $f_*$, hence
\[ \begin{split}
  \frac {|f^{T'}_* \xi_*\tf_* e^{cu}(y_{-\Delta T}) |} {|\xi_* \tf_* e^{cu}(y_{-\Delta T})|}
  = \frac {|f^{T'+T}_* e^{cu}(x_{-T}) |} {| f_*^T e^{cu}(x_{-T})|}.
\end{split} \]
Similarly, 
\[ \begin{split}
  \frac{|e^{cu}(y_{-\Delta T})|}{|f_*^{T'} \tf_*e^{cu}(y_{-\Delta T})|} 
  = \frac{|f_*^{T-\Delta T} e^{cu}(y_{-T})|}{|f_*^{T'} \tf_* f_*^{T-\Delta T} e^{cu}(y_{-T})|}.
\end{split} \]
Finally, by substitution and cancellation, we have 
\[\begin{split}
  \frac{\tL^{cu}\sigma}{\sigma} (x) 
  = \lim_{T\rightarrow\infty} \lim_{T'\rightarrow\infty} 
  \frac{|f_*^{T'+T}\te^{cu}(x_{-T})|}
  {|f_*^{T'}\tf_*f_*^{T-\Delta T}\te^{cu}(y_{-T})|}.
\end{split}\]
For later convenience, we pass $T'$ to $T+T'$, yielding the expression in the statement.
\end{proof}

\subsection{Equivariant divergence by \texorpdfstring{$e^{cu}$}{e cu}}
\hfill\vspace{0.1in}
\label{s:divcu}

Then we take derivative of the volume-ratio formula.
There is a formula by integration by parts of~\eqref{e:gaokao}, which is 
\[ \begin{split}
  \delta L^{cu}\sigma = - \div^{cu} (\sigma X^{cu}),
\end{split} \]
where $\div^{cu}$ is the submanifold divergence on $\cV^{cu}$. 
The main issue is that $X^{cu}$ is not differentiable, so we need more work to find an `ergodic theorem' type formula, which can be sampled by progressively computing a few recursive relations on an orbit.

Roughly speaking, in figure~\ref{f:alau}, $ {\delta \tL^{cu}\sigma}/{\sigma} = (e_7-e_6)/e_7$, where $e_7-e_6=\nabla_{f_*^TX^s} e_T$.
Since $f_*^{T'}\tf_*f_*^{T-\Delta T}\te^{cu}(y_{-T})$ is almost in the unstable subspace after pushing forward for a long time, the difference $e_7-e_6$ is also in the unstable, so we can get the norm of this difference by just applying the unstable co-cube.
More specifically,

\begin{lemma} [one volume ratio for $\delta \tL$] \label{l:eps}
  \[ \begin{split}
  - \frac{\delta \tL^{cu}\sigma}{\sigma} (x) 
  = \lim_{T\rightarrow\infty} 
  \eps^{cu}_T \nabla_{f_*^TX^s} e^{cu}(y_T).
  \end{split} \]
\end{lemma}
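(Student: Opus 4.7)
The plan is to differentiate the volume-ratio formula of the previous lemma and extract the claimed equivariant divergence by letting the outermost pushforward time $T'\to\infty$ project everything onto the $(u+1)$-covector $\eps^{cu}_T$. Write $e^{cu}(y_T) = f_*^T\tf_* f_*^{T-\Delta T}\te^{cu}(y_{-T})$ for the $(u+1)$-vector at $y_T$, so that the previous lemma reads
\[
\frac{\tL^{cu}\sigma}{\sigma}(x) = \lim_{T\to\infty}\lim_{T'\to\infty}\frac{|f_*^{T'}e^{cu}_T|}{|f_*^{T'}e^{cu}(y_T)|}.
\]
The numerator is $\gamma$-independent and coincides with the denominator at $\gamma=0$, so $-\delta(\tL^{cu}\sigma)/\sigma(x) = \lim_{T,T'}\delta\log|f_*^{T'}e^{cu}(y_T)|\big|_{\gamma=0}$. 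A preliminary step identifies the perturbation velocity of the pseudo-orbit: since $y_0\in\cV^s(x)$ by construction of $\xi$ while $y_0 = \tf y_{-\Delta T}$ with $y_{-\Delta T}\in\cV^{cu}(x)$, matching these two tangency constraints at $\gamma=0$ and decomposing $X = X^s + X^{cu}$ forces $\dot y_0|_{\gamma=0} = X^s$, whence $\dot y_T|_{\gamma=0} = f_*^T X^s$.

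Next I would express the logarithmic derivative as an inner product on $(u+1)$-vectors at $y_{T+T'}(\gamma)$,
\[
\delta\log|f_*^{T'}e^{cu}(y_T)| = \frac{\langle f_*^{T'}e^{cu}_T,\ \nabla_\gamma(f_*^{T'}e^{cu}(y_T))\rangle}{|f_*^{T'}e^{cu}_T|^2},
\]
and apply \cref{l:vari} (with $X = f_*^T X^s$, $t=T'$, $e=e^{cu}_T$) to split the numerator into a ``main'' piece $f_*^{T'}\nabla_{f_*^T X^s}e^{cu}(y_T)|_{\gamma=0}$ and an ``error'' piece $(\nabla_{f_*^T X^s}f_*^{T'})e^{cu}_T$.

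For the main piece I use the following asymptotic extraction: because $\wedge^{u+1}V^{cu}$ is one-dimensional and $f_*$-invariant, and because $\eps^{cu}$ annihilates any $(u+1)$-vector carrying a $V^s$-factor, for any $(u+1)$-vector $v$ at $x_T$ the decomposition $v = v^{cu}+v^\perp$ along $\wedge^{u+1}V^{cu}(x_T)$ and its complement gives
\[
\lim_{T'\to\infty}\frac{\langle f_*^{T'}e^{cu}_T,\ f_*^{T'}v\rangle}{|f_*^{T'}e^{cu}_T|^2} = \frac{\eps^{cu}_T(v)}{\eps^{cu}_T(e^{cu}_T)},
\]
with the $v^\perp$-contribution killed because $|f_*^{T'}v^\perp|$ grows at a rate strictly smaller than $|f_*^{T'}e^{cu}_T|$ (by at least the slowest stable contraction $\mu_s>0$). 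The normalising constant $\eps^{cu}_T(e^{cu}_T)$ is independent of $T$ by pullback--pushforward duality and is absorbed into the conventions for $e^{cu}_T$, $\eps^{cu}_T$, leaving exactly $\eps^{cu}_T\,\nabla_{f_*^T X^s}e^{cu}(y_T)$ in the limit.

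The main obstacle is controlling the error piece. Using the integral form of \cref{l:vari}, its integrand at time $\tau$ involves $|f_*^{T+\tau}X^s|\lesssim e^{-\mu_s(T+\tau)}$, the $(u+1)$-vector factor $|f_*^{T+\tau}\te^{cu}(x_{-T})|\lesssim e^{\Lambda(T+\tau)}$ with $\Lambda$ the top $(u+1)$-vector rate, and an outer pushforward $|f_*^{T'-\tau}|\lesssim e^{\Lambda(T'-\tau)}$; after integration in $\tau\in[0,T']$ and division by $|f_*^{T'}e^{cu}_T|^2\sim e^{2\Lambda(T+T')}$ the bound shrinks like $e^{-\mu_s T-\Lambda T'}/\mu_s$, vanishing as $T'\to\infty$. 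Taking finally $T\to\infty$ delivers the claimed identity. The delicate point throughout is the non-commutation of $\nabla_\gamma$ with $f_*^{T'}$, which is rescued precisely by the stable nature of $X^s$: the perturbation direction decays exponentially, consuming the growth of the $(u+1)$-vector and rendering the error harmless.
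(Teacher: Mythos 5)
Your proposal follows essentially the same route as the paper: differentiate the volume-ratio formula, identify the perturbation velocity of $y_T$ as $f_*^T X^s$ (a verification the paper skips, so this is a useful addition), apply the Leibniz rule to split $\delta(f_*^{T'}e^{cu}(y_T))$ into $f_*^{T'}\nabla_{f_*^T X^s}e^{cu}_T$ plus the Hessian term $(\nabla_{f_*^T X^s}f_*^{T'})e^{cu}_T$, extract the main piece by a ``relative decay'' argument, and kill the error piece using the integral formula of \cref{l:vari}. For the main piece, the paper cites the relative-decay lemma of \cite{TrsfOprt} (which also supplies the uniform convergence needed to justify the formal differentiation of \cref{l:ty1}); your inline decomposition $v=v^{cu}+v^\perp$ of $(u+1)$-vectors reproduces the same mechanism but is only pointwise, so the uniformity issue is left unaddressed.

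One concrete slip in the error estimate: when you divide your integrated bound by $|f_*^{T'}e^{cu}_T|^2\sim e^{2\Lambda(T+T')}$, the $T'$-growth cancels exactly; the correct bound is of order $e^{-\mu_s T}/\mu_s$, \emph{uniformly in $T'$}, not $e^{-\mu_s T-\Lambda T'}/\mu_s$. Thus the error term does \emph{not} vanish as $T'\to\infty$ for fixed $T$; it is only the subsequent $T\to\infty$ limit that sends it to zero, matching the paper's bound $\int_0^\infty C\lambda^{t+T}dt\lesssim\lambda^T$. Since you take $T\to\infty$ at the end anyway, the conclusion is unaffected, but the stated decay rate is wrong.
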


\begin{proof}
Differentiate \cref{l:ty1} (this formal differentiation will be justified by its uniform convergence), and treat $e^{cu}(y_T) = f_*^{T}\tf_*f_*^{T-\Delta T}\te^{cu}(y_{-T})$ as a whole when applying the Leibniz rule, and notice that $e(y_t)|_{\gamma=0}=e_{t}$, we get
  \[ \begin{split}
  - \frac{\delta \tL^{cu}\sigma}{\sigma} (x) 
  = \lim_{T\rightarrow\infty} \lim_{T'\rightarrow\infty}-\delta\frac{|e^{cu}_{T+T'}|}{|e^{cu}(y_{T'+T})|}\\
  =\lim_{T\rightarrow\infty} \lim_{T'\rightarrow\infty}
  \frac{|e^{cu}_{T+T'}|\cdot(\delta|e^{cu}(y_{T+T'})|)}{|e^{cu}_{T'+T}|^2}\\
   =\lim_{T\rightarrow\infty} \lim_{T'\rightarrow\infty}
   \frac{|e^{cu}_{T+T'}|\cdot\delta \ip{e^{cu}(y_{T+T'}),e^{cu}(y_{T+T'})}^{\frac{1}{2}}}{|e^{cu}_{T'+T}|^2}
   \\
    =\lim_{T\rightarrow\infty} \lim_{T'\rightarrow\infty}
   \frac{|e^{cu}_{T+T'}|\cdot \frac{1}{2\ip{e^{cu}_{T+T'},e^{cu}_{T+T'}}} 2 \ip{\delta e^{cu}(y_{T+T'}),e^{cu}_{T+T'}}}{|e^{cu}_{T'+T}|^2}
   \\
  = \lim_{T\rightarrow\infty} \lim_{T'\rightarrow\infty}\frac{\ip{e^{cu}_{T+T'},\delta e^{cu}(y_{T'+T})}}{|e^{cu}_{T'+T}|^2}\\
  \end{split} \]
The equality holds because $e^{cu}_{T+T'}=e^{cu}(y{T+T'})|_{\gamma=0}$. For the forth equality to hold, here $\delta e^{cu}(y_{T+T'})=\nabla_{f_*^{T+T'}X^s}e^{cu}_{y_{T+T'}}$ should be the Riemannian derivative. Then 
    \[ \begin{split}
  - \frac{\delta \tL^{cu}\sigma}{\sigma} (x) 
  = \lim_{T\rightarrow\infty} \lim_{T'\rightarrow\infty}\frac{\ip{e^{cu}_{T+T'},\delta e^{cu}(y_{T'+T})}}{|e^{cu}_{T'+T}|^2}\\
   = \lim_{T\rightarrow\infty} \lim_{T'\rightarrow\infty}\frac{\ip{e^{cu}_{T+T'},\nabla_{f_*^{T+T'}X^s}e^{cu}_{T'+T}}}{|e^{cu}_{T'+T}|^2}\\
  = \lim_{T\rightarrow\infty} \lim_{T'\rightarrow\infty} 
  \frac{\ip{e^{cu}_{T'+T}, f_*^{T'}\nabla_{f_*^TX^s} e^{cu}_T + (\nabla_{f_*^TX^s} f_*^{T'}) e^{cu}_T }}{|e^{cu}_{T'+T}|^2}
  \end{split} \]

For the first term in the product, the so-called `relative decay' lemma in the appendix of~\cite{TrsfOprt}shows the uniform convergence of 
  \[ \begin{split}
  \lim_{T'\rightarrow\infty} 
  \frac{\ip{e^{cu}_{T'+T}, f_*^{T'}\nabla_{f_*^TX^s} e^{cu}_T}}{|e^{cu}_{T'+T}|^2}
  = \frac{1}{|e^{cu}_{T}|} \teps^{cu}_T\nabla_{f_*^TX^s} e^{cu}_T
  = \eps^{cu}_T \nabla_{f_*^TX^s} e^{cu}_T.
  \end{split} \]
Intuitively, this lemma says that only the unstable part of $\nabla_{f_*^TX^s} e^{cu}_T$ grows at the same speed as $e^{cu}_{T'+T}$, and all other parts are relatively negligible.

For the other term, first use lemma~\ref{l:vari}, then notice that the growth rate of any $u$-vector is bounded by the unstable $u$-vector by definition 
\[ \begin{split}
  \lim_{T\rightarrow\infty} \lim_{T'\rightarrow\infty}\frac{\ip{e^{cu}_{T'+T}, (\nabla_{f_*^TX^s} f_*^{T'}) e^{cu}_T }}{|e^{cu}_{T'+T}|^2}
  =\lim_{T\rightarrow\infty} \lim_{T'\rightarrow\infty} \ip{\te^{cu}_{T'+T}, \int_0^{T'} \frac 1 { |f_*^{T'-t}\te^{cu}_{T+t}| }f_*^{T'-t} \nabla^2_{f_*^{T+t} X^s, \te^{cu}_{T+t}} F_{T+t} dt }
  \\
  \le \lim_{T\rightarrow\infty} \lim_{T'\rightarrow\infty}\int_0^{T'} C |\nabla^2_{f_*^{T+t} X^s, \te^{cu}_{T+t}} F_{T+t}| dt 
  \le \lim_{T\rightarrow\infty} \lim_{T'\rightarrow\infty}\int_0^{T'} C \lambda^{t+T} dt
  \\
  \le \lim_{T\rightarrow\infty}\int_0^{\infty} C \lambda^{t+T} dt
  \le \lim_{T\rightarrow\infty} C \lambda^{T}
  = 0
\end{split} \]
where the values of the $C$'s above may change from expression to expression.
\end{proof} 

\begin{lemma} [expression of $\delta \tL$ by $\nabla f_*$] \label{l:star}
\[ \begin{split}
  - \frac{\delta \tL^{cu}\sigma}{\sigma} (x) 
  = \lim_{T\rightarrow\infty} 
  \eps^{cu} \nabla_{e^{cu}} X
  -\eta \eps^{cu}\nabla_{ e^{cu}} F 
  - \eps^{cu} ( \nabla_{f_*^{-T}X^{u}} f_*^T) e^{cu}_{-T} 
  + \eps^{cu}_T (\nabla_{X^s} f_*^{T}) e^{cu}
  .
\end{split} \]
\end{lemma}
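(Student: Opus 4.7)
The plan is to start from \cref{l:eps}, which gives $-\delta\tL^{cu}\sigma/\sigma=\lim_T\eps^{cu}_T\nabla_{f_*^TX^s}e^{cu}_T$, and to re-express the derivative of the H\"older cube $e^{cu}_T$ in terms of smooth objects by applying the map-Hessian identity, splitting $X^s=X-\eta F-X^u$, and shifting the reference point of the $X^u$ piece back from $x_0$ to $x_{-T}$.

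First I would apply the identity $\nabla_{f_*^TY}(f_*^TZ)=f_*^T\nabla_YZ+(\nabla_Yf_*^T)Z$, which is well-posed through \cref{l:vari} (it writes $(\nabla_Yf_*^T)Z$ as an integral of the smooth bilinear form $\nabla^2F$), with $Y=X^s$ and $Z=e^{cu}$. The defining relation $f^{*T+t}\eps^{cu}_t=\teps^{cu}(x_{-T})$ immediately yields the pairing identity $\eps^{cu}_Tf_*^T=\eps^{cu}$ at $x_0$, so
\[
  \eps^{cu}_T\nabla_{f_*^TX^s}e^{cu}_T
  =\eps^{cu}\nabla_{X^s}e^{cu}+\eps^{cu}_T(\nabla_{X^s}f_*^T)e^{cu},
\]
and the second summand is already the last term of the claim. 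Substituting $X^s=X-\eta F-X^u$ into the first summand, I would use the bracket identity $\nabla_Ye^{cu}=\nabla_{e^{cu}}Y+\cL_Ye^{cu}$ to transfer the derivative off the cube and onto the smooth fields $X$ and $F$, producing $\eps^{cu}\nabla_{e^{cu}}X$ and $-\eta\eps^{cu}\nabla_{e^{cu}}F$; the resulting Lie-bracket residues and the $F(\eta)e^{cu}$ correction from the product rule $\nabla_{e^{cu}}(\eta F)=\eta\nabla_{e^{cu}}F+F(\eta)e^{cu}$ are then tracked and cancelled via the map-Hessian symmetry $(\nabla_Yf_*^T)e^{cu}=(\nabla_{e^{cu}}f_*^T)Y$.

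For the $X^u$ piece, since $e^{cu}=f_*^Te^{cu}_{-T}$ with $e^{cu}_{-T}=\te^{cu}(x_{-T})$, I would apply the map-Hessian identity once more but based at $x_{-T}$:
\[
  \eps^{cu}\nabla_{X^u}e^{cu}
  =\eps^{cu}(\nabla_{f_*^{-T}X^u}f_*^T)e^{cu}_{-T}
  +\eps^{cu}f_*^T\nabla_{f_*^{-T}X^u}e^{cu}_{-T}.
\]
The second summand vanishes as $T\to\infty$ by the exponential contraction $|f_*^{-T}X^u|\lesssim\lambda^T$ combined with the relative-decay estimate from the appendix of~\cite{TrsfOprt} that was already invoked in \cref{l:eps}, leaving exactly the third term of the claim. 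Assembling the four contributions under $\lim_{T\to\infty}$ gives the stated identity.

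The main obstacle is the middle step: because $e^{cu}$ is only H\"older, the intermediate derivatives $\nabla_Xe^{cu}$ and $\nabla_Fe^{cu}$ are not pointwise defined, so they must be interpreted through the integral form of \cref{l:vari}, which guarantees that every quantity appearing on the right-hand side of the final identity reduces to a smooth object ($\nabla X$, $\nabla F$, or $\nabla^2F$). The Lie-bracket residues and the stray $F(\eta)e^{cu}$ term must be reassembled into $\eps^{cu}\cL_{X^s}e^{cu}$, which recovers the formal identity $\nabla_{X^s}e^{cu}=\nabla_{e^{cu}}X^s+\cL_{X^s}e^{cu}$ at the level of the full expression rather than term-by-term. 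A secondary technical point is that the $T\to\infty$ limit of the $X^u$-shift must commute with the limit inherited from \cref{l:eps}, which again follows from the same exponential decay estimates.
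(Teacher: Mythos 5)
The broad skeleton of your plan — start from \cref{l:eps}, peel off the $(\nabla_{X^s}f_*^T)$ term via the map-Hessian identity, handle $X^u$ with a second map-Hessian split at $x_{-T}$, and read off the divergence-type terms — does match the paper's structure for the outer two terms. But the middle step, which produces $\eps^{cu}\nabla_{e^{cu}}X$ and $-\eta\,\eps^{cu}\nabla_{e^{cu}}F$, is where a genuine gap appears, and your proposed cancellation mechanism does not close it.

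The subtle point is that the object differentiated in \cref{l:eps} is $e^{cu}(y_T)=f_*^T\,\tf_*\,e^{cu}(y_{-\Delta T})$, and the derivative $\nabla_{f_*^T X^s}$ there is shorthand for the total $\gamma$-derivative $\delta$. It therefore picks up \emph{three} contributions: the variation of the base orbit (giving $(\nabla_{X^s}f_*^T)e^{cu}$), the variation of $\tf_*$ (giving $(\delta\tf_*)e^{cu}$), and the variation of the starting cube $e^{cu}(y_{-\Delta T})$ along $-X^{cu}$. The paper tracks all three; the key identity $\cL_X(\tf_*e^{cu})=0$ — which holds because $\tf$ is the flow of $X$ — converts $(\delta\tf_*)e^{cu}$ directly into $\nabla_{e^{cu}}X$ with no Lie-bracket residual. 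Your Step 1 collapses the last two contributions into a single spatial derivative $\nabla_{X^s}e^{cu}$, and your subsequent split $\nabla_{X^s}e^{cu}=\nabla_X e^{cu}-\eta\nabla_F e^{cu}-\nabla_{X^u}e^{cu}$ then needs $\nabla_X e^{cu}$, which is not pointwise defined ($X$ is not tangent to the surface where $e^{cu}$ lives) unless you invoke precisely the $\tf_*$-extension and $\cL_X(\tf_*e^{cu})=0$. You never do so; instead you rewrite $\nabla_X e^{cu}=\nabla_{e^{cu}}X+\cL_X e^{cu}$ and assert that the residual $\cL_X e^{cu}$ (together with an $F(\eta)e^{cu}$ term) cancels ``via the map-Hessian symmetry $(\nabla_Y f_*^T)e^{cu}=(\nabla_{e^{cu}}f_*^T)Y$.'' That symmetry is a fact about the Hessian of a smooth map; it has nothing to do with and cannot absorb a Lie derivative of the H\"older cube, and indeed no such cancellation occurs in the paper. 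Your closing remark that the residues ``reassemble into $\eps^{cu}\cL_{X^s}e^{cu}$'' does not help either: there is no reason this quantity should vanish.

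A second, smaller discrepancy: you generate the $F(\eta)e^{cu}$ correction by applying the bracket identity to $\eta F$ as a unit and then using the product rule in the field slot, $\nabla_{e^{cu}}(\eta F)=\eta\nabla_{e^{cu}}F+F(\eta)e^{cu}$. The paper avoids this entirely by using $C^\infty$-linearity in the \emph{direction} slot, $\nabla_{\eta F}e^{cu}=\eta\nabla_F e^{cu}$, and then $\nabla_F e^{cu}=\nabla_{e^{cu}}F$ because $\cL_F e^{cu}=0$ on $\cV^{cu}$. So no $F(\eta)$ term ever appears in \cref{l:star}, and you should not introduce one. To repair the argument along the lines you intend, you would need to (i) make the $\tf_*$ contribution explicit and use $\cL_X(\tf_*e^{cu})=0$ to get $\nabla_{e^{cu}}X$ cleanly, and (ii) use direction-slot linearity rather than field-slot product rule for the $\eta F$ piece. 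With those two repairs the computation reduces to the paper's.
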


\begin{proof}
Apply the Leibniz rule on $\nabla_{f_*^TX^s}e^{cu}(y_T) = \nabla_{f_*^TX^s}(f_*^{T}\tf_* e^{cu}(y_{-\Delta T}))$, and take value $\gamma=0 $ we get
\begin{equation} \begin{split} \label{e:da}
  - \frac{\delta \tL^{cu}\sigma}{\sigma} (x) 
  = \lim_{T\rightarrow\infty} 
  \eps^{cu}_T \left(
  (\delta f_*^{T}) e^{cu}
  + f_*^{T}(\delta \tf_*) e^{cu}
  + f_*^{T} \nabla_{-X^{cu}} e^{cu}(y_{-\Delta T})
  \right)
  \\
  = \lim_{T\rightarrow\infty} 
  \eps^{cu}_T (\delta f_*^{T}) e^{cu}
  + \eps^{cu} (\delta \tf_*) e^{cu}
  +   \eps^{cu} \nabla_{-X^{cu}} e^{cu}(y_{-\Delta T})
\end{split} \end{equation}
This equation can be intuitively explained in \cref{f:alau} as:
\[ \begin{split}
\gamma \nabla_{f_*^TX^s}(f_*^{T}\tf_* e^{cu}(y_{-\Delta T}))
\approx e_6-e_7
\\
= [(e_6-e_7)-f_*^T(e_4-e_5)]
 + [f_*^T(e_4-e_3)]
 + [f_*^T(e_3-e_5)],
\end{split}\]
if we multiply $\eps^{cu}$ on both sides, then the three terms on the right side correspond to the three terms in \cref{e:da}.

For the first term in \cref{e:da}, just use
\[ \begin{split}
  (\delta f_*^{T}) e^{cu}
  := (\nabla_{X^s} f_*^{T}) e^{cu}
\end{split} \]

For $(\delta \tf_*) e^{cu}$ in the second term in~\eqref{e:da},
\[\begin{split}
    (\delta \tf_*) e^{cu}
    :=\left(\left.
    \dd{}{\gamma}(\tf_*(\gamma,y_{-\Delta T})e^{cu}(y_{-\Delta T}))
    -\tf_*(\gamma,y_{-\Delta T})\dd{}{\gamma}e^{cu}(y_{-\Delta T})
    \right)\right|_{\gamma=0}
\end{split}\]
The first term equals the sum of partial derivatives with respect to $\gamma$ and $y_{-\Delta T}$, where  $y_{-\Delta T}$ further depends on $\gamma$.
Notice that $\tf_*(\gamma=0,y_{-\Delta T})=Id$ and $(y_{-\Delta T})|_{\gamma=0}=x$, so
\[\begin{split}
    (\delta \tf_*) e^{cu}
    :=\left(\left.
    \dd{}{\gamma}(\tf_*(\gamma,x)e^{cu}(x))
    +\dd{}{\gamma}(\tf_*(0,y_{-\Delta T})e^{cu}(y_{-\Delta T}))
    -\tf_*(\gamma,y_{-\Delta T})\dd{}{\gamma}e^{cu}(y_{-\Delta T})
    \right)\right|_{\gamma=0},
\end{split}\]
and the last two terms cancel.
Since $\tf$ is the flow of $X$, we can use the Lie bracket statement $\cL_X (\tf_* e^{cu})=0$, to get 
\[\begin{split}
    (\delta \tf_*) e^{cu}
    =\left(\left.
    \dd{}{\gamma}(\tf_*(\gamma,x)e^{cu}(x))
    \right)\right|_{\gamma=0}
    =\nabla_{X}(\tf_*e^{cu})
    =\nabla_{e^{cu}} X .
\end{split}\]

For the last term in \cref{e:da}, $\nabla_{X^{cu}} e^{cu}(y_{-\Delta T}) = \nabla_{X^{c}} e^{cu} + \nabla_{X^{u}} e^{cu}$.
Then, $ \nabla_{X^{c}} e^{cu} = \eta \nabla_F e^{cu} = \eta \nabla_{ e^{cu}} F$, where $\eta:=\teps^c X$. This is because by definition, $e^{cu}_{t}$ is generated by the push-forward of the flow $F$ when $ t\leq 0$, so we have $\cL_Fe^{cu}=\nabla_Fe^{cu}-\nabla_{e^{cu}}F=0$.
Also,
$\nabla_{X^{u}} e^{cu} = f_*^T \nabla_{f_*^{-T}X^{u}} e^{cu}_{-T} + ( \nabla_{f_*^{-T}X^{u}} f_*^T)  e^{cu}_{-T}$ by the definition of  $\nabla_{f_*^{-T}X^{u}} f_*^T$. 
Since $f_*^{-T}X^{u} \rightarrow 0$,
\[ \begin{split}
  \lim_{T\rightarrow\infty} \eps^{cu} f_*^{T}\nabla_{f_*^{-T}X^{u}} e^{cu}_{-T}
  = \lim_{T\rightarrow\infty} \eps^{cu}_{-T} \nabla_{f_*^{-T}X^{u}} e^{cu}_{-T}
  =0,
\end{split} \]
Hence,
\[ \begin{split}
  \lim_{T\rightarrow\infty} 
  \eps^{cu} \nabla_{X^{cu}} e^{cu}  
  = \lim_{T\rightarrow\infty} 
  \eps^{cu} \left(
  \eta \nabla_{ e^{cu}} F 
  + ( \nabla_{f_*^{-T}X^{u}} f_*^T) e^{cu}_{-T}.
  \right)
\end{split} \]
To summarize, we transformed~\eqref{e:da} to the expression in the lemma.
\end{proof} 

\begin{theorem}
[center-unstable equivariant divergence formula] \label{l:paiqiu}
\[ \begin{split} 
  - \frac{\delta \tL^{cu}\sigma}{\sigma} (x) 
  = \div^{cv} X
  + \cS (\div^{cv} \nabla F, \div^{cv} F) X
  .
\end{split} \]
\end{theorem}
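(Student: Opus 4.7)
The strategy is to start from \cref{l:star} and match each of its four limiting summands to a piece of the right-hand side of the theorem. The first summand $\eps^{cu}\nabla_{e^{cu}}X$ is to be identified with $\div^{cv}X$, and the second summand $-\eta\,\eps^{cu}\nabla_{e^{cu}}F$ with the center piece $-(\div^{cv}F)\,\eta$ of $\cS(\div^{cv}\nabla F,\div^{cv}F)X$. These identifications use the normalization of $\eps^{cu}_T$ fixed in the proof of \cref{l:eps} (namely $\eps^{cu}_T = \teps^{cu}_T/|e^{cu}_T|$), together with the scaling rule $\nabla_{e^{cu}}Y = |e^{cu}|\nabla_{\te^{cu}}Y$ that follows from the cube derivative definition, so that the volume factors cancel against each other.

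The main work is to rewrite the two remaining Hessian-of-flow summands as integrals along the orbit. Applying \cref{l:vari} to $\eps^{cu}_T(\nabla_{X^s}f_*^T)e^{cu}$ gives
\[
\eps^{cu}_T(\nabla_{X^s}f_*^T)e^{cu} = \int_0^T \eps^{cu}_T\,f_*^{T-\tau}\,\nabla^2_{f_*^\tau X^s,\,e^{cu}_\tau}F_\tau\,d\tau,
\]
and the same tool rewrites $-\eps^{cu}(\nabla_{f_*^{-T}X^u}f_*^T)e^{cu}_{-T}$ as an integral over $[-T,0]$ after a change of variable. The crucial propagation identity is $\eps^{cu}_T \circ f_*^{T-\tau} = \eps^{cu}_\tau$, which follows directly from the transport definition of $\eps^{cu}_t$ by pulling back $\teps^{cu}(x_{-T})$ along the flow. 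Using it and the same normalization bookkeeping as in the first paragraph, each integrand collapses to $(\div^{cv}\nabla F)_\tau(f_*^\tau X^{s/u})$, equivalently $(f^{*\tau}(\div^{cv}\nabla F)_\tau)X^{s/u}$.

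Sending $T\to\infty$ produces the two integrals $\int_0^\infty f^{*t}(\div^{cv}\nabla F)_t(X^s)\,dt$ and $-\int_{-\infty}^0 f^{*t}(\div^{cv}\nabla F)_t(X^u)\,dt$. Because $f_*^t X^s \in V^s(x_t)$ is annihilated by the $V^{cu*}(x_t)$-part of any covector, only the stable projection $(\div^{cv}\nabla F)^s_t$ contributes in the forward integral, and symmetrically only $(\div^{cv}\nabla F)^u_t$ in the backward integral; the arguments $X^s, X^u$ can then be replaced by the full $X$. Invoking characterization (b) of \cref{t:ASF} reconstitutes these two integrals together with $-\eta\,\div^{cv}F$ from the second summand as $\cS(\div^{cv}\nabla F,\div^{cv}F)X$. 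Combining with the first summand $\div^{cv}X$ yields the formula of the theorem.

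The principal obstacle is the careful justification of the propagation identity $\eps^{cu}_T\circ f_*^{T-\tau} = \eps^{cu}_\tau$ and the subsequent identification $\eps^{cu}_\tau\nabla^2_{X,e^{cu}_\tau}F_\tau = (\div^{cv}\nabla F)_\tau(X)$, keeping all scaling factors between $(e^{cu},\eps^{cu})$ and $(\te^{cu},\teps^{cu})$ consistent so that nothing survives in the final expression. A secondary technical check is that the pair $(\div^{cv}\nabla F,\div^{cv}F)$ lies in the domain $\cA(K)$ of $\cS$; this requires verifying the compatibility condition $F(\div^{cv}F) = (\div^{cv}\nabla F)(F)$, which should follow from flow-invariance of the $cu$-splitting together with the Leibniz rule applied to $\teps^{cu}$ and $\te^{cu}$ along $F$.
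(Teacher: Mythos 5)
Your proposal is correct and follows essentially the same route as the paper's own proof: apply \cref{l:vari} to turn the two Hessian-of-flow terms from \cref{l:star} into orbit integrals, use the covariance of $\eps^{cu}_t$ under pullback to telescope $\eps^{cu}_T\circ f_*^{T-\tau}$ down to $\eps^{cu}_\tau$, let the $(\eps^{cu},e^{cu})$-pairing normalization cancel so each term becomes a $\div^{cv}$-quantity, recognize the resulting expression as the split-propagate expansion of $\cS$ from characterization (b) of \cref{t:ASF}, and close by checking the compatibility condition $F(\div^{cv}F)=(\div^{cv}\nabla F)(F)$ so that $(\om,\psi)\in\cA(K)$. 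Two small points worth tightening: the scaling factor between $e^{cu}$ and $\te^{cu}$ is the pairing $\teps^{cu}(e^{cu})$ rather than literally $|e^{cu}_T|$ (the two coincide only under the ``unit-cube'' convention of that particular subsection), and the compatibility check you defer is nontrivial enough that it deserves the explicit Leibniz/torsion-free computation the paper carries out in \cref{e:ucsb} rather than a ``should follow from'' — without it, applying $\cS$ would not be justified.
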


\begin{proof}
Applying lemma~\ref{l:vari}, $\nabla f_*^T$ in the last two terms of lemma~\ref{l:star} become
\[ \begin{split}
  - \frac{\delta \tL^{cu}\sigma}{\sigma} (x) 
  = \lim_{T\rightarrow\infty} 
   (\eps^{cu} \nabla_{e^{cu}} X
  -\eta \eps^{cu}\nabla_{ e^{cu}} F  \\ 
  - \eps^{cu} \int_0^T f_*^{T-t} \nabla^2_{f_*^{t-T} X^{u}, e^{cu}_{t-T}} F_{t-T} dt
  + \eps^{cu}_T \int_0^T  f_*^{T-t} \nabla^2_{f_*^t X^s, e^{cu}_t } F_t dt )
  \\
  = \teps^{cu} \nabla_{\te^{cu}} X
  -\eta \teps^{cu}\nabla_{\te^{cu}} F 
  - \int_{-\infty}^0 \teps^{cu}_{t} \nabla^2_{f_*^{t} X^{u}, \te^{cu}_{t}} F_{t} dt
  + \int_0^\infty \teps^{cu}_{t} \nabla^2_{f_*^t X^s, \te^{cu}_t } F_t dt 
  .
\end{split} \]
Denote covector field $\om$ and function $\psi$ on $K$ such that 
\[ \begin{split}
  \om(Y):=\div^{cv} \nabla F(Y) 
  := \teps^{cu}\nabla^2_{Y,\te^{cu}}F,
  \quad \textnormal{} \quad 
  \psi := \div^{cv} F := \teps^{cu}\nabla_{\te^{cu}} F.
\end{split} \]
Then we can get the `split-propagate' scheme in characterization (b) of the adjoint shadowing operator in theorem~\ref{t:ASF},
\[ \begin{split} 
  - \frac{\delta \tL^{cu}\sigma}{\sigma} (x) 
  = \div^{cv} X
  + \left(
  - \psi \eps^c 
  - \int_{-\infty}^0  f_*^{t}\om^u_{t} dt
  + \int_0^\infty  f_*^t \om^s_{t}    dt 
  \right) X
  .
\end{split} \]

To check $(\om,\psi) \in \cA$, once again notice that $\nabla_Fe^{cu} = \nabla_{e^{cu}} F$, and $\eps^{cu}_t$ is invariant under the push-forward of the flow $F$, we have
\begin{equation} \begin{split} \label{e:ucsb}
  F(\psi) = F(\teps^{cu}\nabla_{\te^{cu}} F) 
  = \eps^{cu} \cL_F(\nabla_{e^{cu}} F) 
  = \eps^{cu} (\nabla_F(\nabla_{e^{cu}} F) - \nabla_{\nabla_{e^{cu}} F}F) 
  \\
  = \eps^{cu} (\nabla^2_{F,e^{cu}}F + \nabla_{\nabla_Fe^{cu}}F - \nabla_{\nabla_{e^{cu}} F}F) 
  = \eps^{cu} \nabla^2_{F,e^{cu}}F 
  = \om(F).
\end{split} \end{equation}
Hence, the adjoint shadowing lemma applies and the theorem is proved.
\end{proof}

\subsection{Removing center direction from \texorpdfstring{$e^{cu}$}{e cu}}
\hfill\vspace{0.1in}
\label{s:divu}

This subsection seeks to further simplify the equivariant divergence formula by using the unstable cube $e$ instead of the center-unstable cube $e^{cu}$.
This formula might be beneficial for potential numerical applications.

\goldbach*

\begin{remark*}
(1)
This formula is given by $u$ many recursive relations on an orbit, which should be optimal.
(2)
$\rho(F(\eta))=0$. 
To see this, choose an $x\in K$ such that $\lim\limits_{T\to\infty} \frac{1}{T}\int_0^T \Phi(f^t(x)) dt= \rho(\Phi)$ for every continuous function $\Phi$.
Take $\Phi=F(\eta)$ and we have 
\[ \begin{split}
  \rho(F(\eta)) 
  = \lim_{T\rightarrow\infty}\frac1T\int_0^TF(\eta)(f^t(x)) dt
  = \lim_{T\rightarrow\infty}\frac1T(\eta(f^Tx)-\eta(x))
  = 0.
\end{split} \]
\end{remark*}

\begin{proof}
In this proof we let $\te^{cu}:=\te\wedge F$, $\teps^{cu}:=\teps\wedge\eps^c$, where $\eps^c \in V^{*c}$ and $\eps^c(F)=1$.
Now $|\te^{cu}|\neq1$ but still $\teps^{cu}(\tecu)=1$. 
Notice that $\te^{cu}$ and $\teps^{cu}$ always appear in pairs and the constant factors cancel, so the equivariant divergence formula still holds with the new $\teps^{cu}$ and $\te^{cu}$.
Since $F\perp V^{*u}$, $\eps^c\perp V^u$, 
\[ \begin{split}
  \div^{cv} X = \teps\wedge\eps^c(\nabla_{\te\wedge F}X)
  = \teps \wedge \eps^c(  \te\wedge\nabla_FX )
  + \sum_{i=1}^u \teps \wedge \eps^c (e_1\wcw\nabla_{e_i} X \wcw e_u\wedge F)
  \\
  = \eps^c\nabla_FX + \teps \nabla_{\te} X 
  = \eps^c\nabla_FX + \div^v X .
\end{split} \]
Similarly,
\[ \begin{split}
  \div^{cv} F 
  = \eps^c\nabla_FF + \div^v F.
\end{split} \]
Also, due to invariance of $V^{cu}$ and $V^s$,
\[ \begin{split}
  (\div^{cv} {f^T_*})(\cdot)
  := \teps^{cu} (\nabla_{\slot} f_*^T) \te^{cu}_{-T}
  = \teps (\nabla_{\slot} f_*^T) \te_{-T}
  + \eps^c (\nabla_{\slot} f_*^T) F_{-T} .
\end{split} \]

Substituting into lemma~\ref{l:star}, we get
\begin{equation} \begin{split} \label{e:huasheng}
  - \frac{\delta \tL^{cu}\sigma}{\sigma} (x) 
  = \lim_{T\rightarrow\infty} 
  (\div^v X 
  -\eta \div^v F 
  - \eps ( \nabla_{f_*^{-T}X^{u}} f_*^T) e_{-T} 
  + \eps_T (\nabla_{X^s} f_*^{T}) e
  \\
  + \eps^c\nabla_FX
  - \eta \eps^c\nabla_F F
  - \eps^c ( \nabla_{f_*^{-T}X^{u}} f_*^T) F_{-T} 
  + \eps^c_T (\nabla_{X^s} f_*^{T}) F
  .
\end{split} \end{equation}
By the same proof of proposition~\ref{l:paiqiu}, the first line in~\eqref{e:huasheng} can be expressed by an equivariant divergence in the unstable manifold.

For the terms in the second line of~\eqref{e:huasheng}, when $T\rightarrow\infty$
\[ \begin{split}
  \eps^c ( \nabla_{f_*^{-T}X^{u}} f_*^T) F_{-T} 
  = \eps^c \nabla_{X^{u}} F -  \eps^c_{-T} \nabla_{f_*^{-T}X^{u}} F_{-T}
  \rightarrow \eps^c \nabla_{X^{u}} F ;
  \\
  \eps^c_T (\nabla_{X^s} f_*^{T}) F
  =  \eps^c_T \nabla_{f_*^{T}X^s} F_T - \eps^c\nabla_{X^s} F
  \rightarrow - \eps^c\nabla_{X^s} F.
\end{split} \]
Also,
\[ \begin{split}
  \eta \eps^c\nabla_F F
  = \eps^c\nabla_{\eta F} F
  = \eps^c\nabla_{X^c} F.
\end{split} \]
Hence, the second line equals
\[ \begin{split}
  \eps^c\nabla_FX
  - \eta \eps^c\nabla_F F
  - \eps^c \nabla_{X^{u}} F 
  - \eps^c\nabla_{X^s} F
  = \eps^c(\nabla_FX- \nabla_{X} F)
  = \eps^c\cL_FX
  = F(\eta),
\end{split} \]
where the last equality is because $F(\eta)=\cL_F(\eps^c X)$ and $ \cL_F \eps^c =0$.
This proves the expression in the lemma.
\end{proof} 

\hfill\vspace{0.1in}

\section{Unstable contribution of linear response}
\label{s:UC}

This section proves \cref{uc}, which shows that we can use the equivariant divergence to express the unstable contribution of linear response.

\silverbach*

\begin{proof}
The first equation is due to \cref{t:ASF}.
To prove the second equation, we only need to prove that for any $t$,
$$
\rho(X^{cu}(\Phi_t))=\rho(\Phi\circ f^t\frac{\delta\tL^{cu}\sigma}{\sigma}),
$$
and take integral $\int_{-\infty}^{\infty} dt$ of both sides of the equation.
We shall prove this by integration by parts on local center-unstable foliation in sets from a partition $R$ of $K$, whose construction is given below.

First define some notations for the local product structure.
We use $B(x,r)$ and $\Bar{B}(x,r)$ to denote the open ball and the closed ball center at $x$ of radium $r$.
For each $x\in K$, we choose a $r'_x>0$ so small that the neighbourhood $B(x,r'_x)\bigcap K$ has a local center-unstable foliation. 
There is a local product structure in this foliation. 
For each $a\in \cV^{cu}_{r}(x)$ and $b\in \cV^{s}_r(x)\bigcap K$ we denote $[a,b]$ as the unique transverse intersection of $\cV^{s}_{loc}(a)$ and $\cV^{cu}_{loc}(b)$ in $B(x,r'_x)\bigcap K$: this is well-defined when $r$ is small enough. 
Conversely, for each $x'$ near $x$, by the continuity of the stable and center-unstable manifolds, there exist unique 
$a=\cV^{cu}_r(x) \bigcap \cV^{s}_{loc}(x')=: \pi^{cu}_x(x')$ and
$b=\cV^{s}_r(x) \bigcap \cV^{cu}_{loc}(x') =: \pi^{s}_x(x')$
such that $[a,b]=x'$, and $a$ and $b$ both depend on $x'$ continuously: they are sometimes called the local coordinate of $x'$.

Then, for each point $x$, we define rectangles containing $x$.
For two sets $A\subseteq \cV^{cu}_{r}(x)$ and $B\subseteq \cV^{s}_r(x)\bigcap K$, we call the set
$[A,B]:=\{[a,b]: a\in A,b\in B\}$ a rectangle. 
Choose an $r_x<r'_x$ small enough.
We take $A=B(x,r_x)\bigcap\cV^{cu}_{loc}(x)$ and $B=B(x,r_x)\bigcap\cV^{s}_{loc}(x)\bigcap K$ which are open in $\cV^{cu}_{loc}(x)$  and $\cV^{s}_{loc}(x)\bigcap K$, $\Bar{A}=\Bar{B}(x,r_x)\bigcap\cV^{cu}_{loc}(x)$ and $\Bar{B}=\Bar{B}(x,r_x)\bigcap\cV^{s}_{loc}(x)\bigcap K$ which are closed in $\cV^{cu}_{loc}(x)$  and $\cV^{s}_{loc}(x)\bigcap K$ . 
Then $r_x$ can be so small that $[A,B]$ and $[\Bar{A},\Bar{B}]$ are well-defined rectangles in $B(x,r'_x)$.

We claim that $[A,B]$ is open in $K$ and  $[\Bar{A},\Bar{B}]$ is closed in $K$.
To see that, first notice that if $y\in [A,B]$ then $\pi^{cu}_x(y) \in A$. 
Then for any $y'\in K$ close  enough to $y$, $\pi^{cu}_x(y')\in A$, since $A$ is open and $\pi^{cu}_x$ is continuous. Also, for any $y'\in K$ close to $y$, $\pi^{s}_x(y') \in B$. 
Hence $y'\in [A,B]$, which imply $[A,B]$ is open in $K$. 
Similarly, $[\Bar{A},\Bar{B}]$ is closed in $K$.

Then we construct the partition $R$ from an open cover $R'$, which consists of rectangles with negligible boundaries.
Denote $R'(x)=[A,B]$ and $cl(R'(x))=[\Bar{A},\Bar{B}]$. 
Define $\partial R'(x)=cl(R'(x))-R'(x)$.
Then $r_x$ can be chosen such that $\partial R'(x)$ has zero physical SRB measure. 
This is because by definition, $\partial R'(x)$ will not intersect for different $r_x$; and $r_x$ has uncountable options while the physical measure is finite.
Then $\{{R'(x):x\in K}\}$ forms a open cover of $K$.
$K$ is compact so we can choose a finite subcover ${R'_1,R'_2,\dots,R'_m}$. 
We define $R_i= (R'_i-\bigcup_{k=1}^{i-1} cl(R'_k)),1\leq i\leq m$, then $R_i$ are pairwise disjoint and $\rho(K-\bigcup_{i=1}^{m}R_i)=0$. 
Define the partition $R:=\{R_i \}_{1\leq i\leq m}$, and this is the partition we want.

Then we can do integration by parts.
Let $\sigma'$ denote the factor measure and $\sigma$ denote the conditional density of $\rho$ with respect to this foliation. 
Notice that $\partial R := K-\bigcup_{i=1}^m R_m$ has zero physical measure, we take the following integration on $\bigcup_{i=1}^mR_i$. 
\[\begin{split}
    \rho(X^{cu}(\Phi_t))=\iint \sigma X^{cu}\cdot\grad(\Phi\circ f^t)dxd\sigma'(x).
\end{split}\]
Integrate-by-parts on unstable manifolds, note that the flux terms on the boundary of the partition cancel, so 
\[\begin{split}
    \rho(X^{cu}(\Phi_t))
    =\iint -\div_\sigma^{cu}X^{cu}(\Phi\circ f^t)\sigma dxd\sigma'(x)
    =\rho(\Phi\circ f^t\frac{\delta\tL^{cu}\sigma}{\sigma}).
\end{split}\]
Take it back to the formula and we can finish the proof.
\end{proof}

\appendix

\section{Derivation only in the unstable}
\label{a:u}

Our proof in the main body of the paper feels like taking a detour, since we started on the center-unstable manifold and then removed the center direction.
In this appendix we do not take that detour, and prove a similar formula by working only in the unstable manifold.
This proof is shorter and intuitive, but is only formal.
Nevertheless, it is consolidating to see that two proofs give equivalent formulas.

We first mention the main difficulties that prevent making the proof in the appendix rigorous. The first difficulty is the lack of smoothness. 
For example, notice that $\nabla_e X^c$ is not well-defined in general, since the oblique projection to $V^c$ also depends on $V^{su}$, which is not differentiable in general cases.
However, here we only do the formal calculate, and we assume that every object we use is smooth enough and all the notions are well-defined. 
Another main difficulty is the `absolutely continuous property of the holonomy map' we want to use here is non-standard: we shall explain this in the formal proof.

We add some new notations similar to those in the main body of the paper, but in unstable version instead of center-unstable version. 
We fix the point of interest $x$, choose a local unstable foliation, denote the density of the conditional SRB measure on  $\cV^u_r(x)$ at $\gamma=0$ by $\sigma^u$.
Similarly, we can define the local unstable transfer operator $\tL^u$, which is the transfer operator of $\tf\xi': \cV^u(x)\to\cV^u(x)$, $\tf$ is still the time one map of the flow $\gamma X$ and $\xi'$ is the holonomy map of center-stable foliation. 
Then we can define the ratio $\frac{\tL^u\sigma^u}{\sigma^u}$ and $\frac{\delta \tL^u\sigma^u}{\sigma^u}$. 
We will calculate the latter one.

\begin{theorem}
    \label{t:divuu}
\[ \begin{split}
  \frac{\delta \tL^u\sigma^u} {\sigma^u}
  = -\div^v X - \cS(\div^v\nabla F,\div^vF)X.
\end{split} \]
\end{theorem}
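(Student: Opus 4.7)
The plan is to transcribe the four-step derivation of Sections~\ref{s:cuL}--\ref{s:divcu} into the purely unstable setting, using the unstable cube $e$ and co-cube $\eps$ in place of $e^{cu}, \eps^{cu}$, and splitting $X = X^u + X^{cs}$ rather than $X = X^{cu} + X^s$. Absorbing the center direction into the ``stable'' side of the split should eliminate the $F(\eta)$ term that appears in Theorem~\ref{t:eqdiv} and land directly on the cleaner expression in Theorem~\ref{t:divuu}.

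Concretely, the plan is: Step 1. Derive a volume-ratio analog of Lemma~\ref{l:ty1}. Define a pseudo-orbit $y_t$ shadowing $x_t$ whose discontinuity at $t=0$ is $\tf^{-1}$ composed with the center-stable holonomy $\xi'$ onto $\cV^u(x)$, and obtain
\[
  \frac{\tL^u\sigma^u}{\sigma^u}(x)
  = \lim_{T\to\infty}\lim_{T'\to\infty}
  \frac{|f_*^{T+T'}\te(x_{-T})|}{|f_*^{T'}\tf_* f_*^{T-\Delta T}\te(y_{-T})|}
\]
from the SRB Jacobian identity (in its $u$-dimensional form) and absolute continuity of $\xi'$. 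Step 2. Differentiate in $\gamma$ at $\gamma=0$, treat $e(y_{T+T'})$ as a single object under the Leibniz rule, and apply the relative-decay lemma of~\cite{TrsfOprt} to control the $T'\to\infty$ limit, giving the analog of Lemma~\ref{l:eps},
\[
  -\frac{\delta\tL^u\sigma^u}{\sigma^u}(x)
  = \lim_{T\to\infty}\eps_T\,\nabla_{f_*^T X^{cs}}\,e(y_T).
\]

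Step 3. Expand by the Leibniz rule exactly as in Lemma~\ref{l:star}, using $\cL_X(\tf_*e)=0$ to convert $(\delta\tf_*)e$ into $\nabla_e X$ and pulling the drift term back in time so $f_*^{-T}X^u\to 0$. Since $\cV^u$ is genuinely $f^t$-invariant, no $\eta\,\eps\nabla_e F$ term appears. Step 4. Apply Lemma~\ref{l:vari} to replace each $\nabla f_*^T$ by an integral of $\nabla^2 F$, arriving at
\[
  -\frac{\delta\tL^u\sigma^u}{\sigma^u}
  = \div^v X
  - \int_{-\infty}^0 \teps_t\,\nabla^2_{f_*^t X^u,\,\te_t}F_t\,dt
  + \int_0^{\infty} \teps_t\,\nabla^2_{f_*^t X^{cs},\,\te_t}F_t\,dt.
\]
Setting $\om(Y):=\div^v\nabla F(Y)=\teps\,\nabla^2_{Y,\te}F$ and $\psi:=\div^v F=\teps\,\nabla_{\te}F$, a Lie-derivative bookkeeping identical to \cref{e:ucsb} yields $F(\psi)=\om(F)$, so $(\om,\psi)\in\cA(K)$. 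The two integrals, together with the $-\psi\eps^c$ contribution produced by the $X^c$ component of $X^{cs}$, then match the split-propagate form of $\cS(\om,\psi)X$ in Theorem~\ref{t:ASF}(b), and the claim follows.

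The main obstacle, and the reason this argument is only formal, is twofold and is precisely what forced the rigorous derivation in the main body to detour through $\cV^{cu}$. First, the manipulation relies on smoothness the hyperbolic splitting does not possess: the projection $P^{cs}$, and hence the variations of $\xi'$ and of quantities like $X^{cs}$ along $e$, depend on the H\"older (not smooth) splitting $V^u\oplus V^{cs}$, so terms such as $\nabla_e X^{cs}$ are not literally well-defined. Second, the absolute-continuity identity needed in Step~1 concerns the center-stable holonomy; unlike the classical case of the stable holonomy invoked in \cref{e:holo}, this is not covered by the standard reference~\cite{barreirapesin} and would need an independent justification. The payoff is that a formal argument now reaches the same answer more directly, giving a consistency check of Theorem~\ref{t:eqdiv}.
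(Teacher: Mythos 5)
Your proposal takes a route that is genuinely different from \emph{both} proofs the paper gives for \cref{t:divuu}. The appendix's rigorous proof is indirect: it reduces to \cref{t:eqdiv} using the a.e.\ identity $\delta\tL^u\sigma^u/\sigma^u=\delta\tL^u\sigma^{cu}/\sigma^{cu}$ and the computation $\delta\tL^c\sigma^{cu}/\sigma^{cu}=-F(\eta)$. The appendix's formal proof does \emph{not} replay the four-step volume-ratio machinery at all: it starts from the pointwise identity $\delta\tL^u\sigma^u/\sigma^u=\eps\,\cL_{X^u}e$ in~\eqref{e:xiaoyue} and decomposes the \emph{perturbation} as $X^u=X-X^c-X^s$, handling each piece separately, with $X^c$ treated purely algebraically: $\eps\nabla_e X^c=\eps\nabla_e(\eta F)=\eta\,\eps\nabla_{e}F=\eta\,\div^vF$, using $\eps(F)=0$. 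You instead transplant \cref{l:ty1,l:eps,l:star,l:paiqiu} wholesale and decompose the \emph{holonomy direction} as $X=X^u+X^{cs}$.

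This difference is fatal, not cosmetic, and the obstruction is one you do not list. Putting $X^c$ inside $X^{cs}$ means it gets pushed forward in time, and $f_*^tX^c=\eta(x)\,F_t$ is bounded but never decays; everything in the $\cV^{cu}$ derivation that relies on exponential decay of $f_*^tX^s$ breaks. Concretely: the Step-2 error term, the analogue of $\ip{e^{cu}_{T'+T},(\nabla_{f_*^TX^s}f_*^{T'})e^{cu}_T}/|e^{cu}_{T'+T}|^2$ in \cref{l:eps}, is controlled there by $|f_*^{T+t}X^s|\le C\lambda^{T+t}$ and vanishes as $T\to\infty$; with $X^{cs}$ in its place the bound is merely $O(1)$ and the term does not vanish, so the claimed formula $\lim_T\eps_T\nabla_{f_*^TX^{cs}}e(y_T)$ is already wrong. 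Likewise, your Step-4 integral $\int_0^\infty\teps_t\nabla^2_{f_*^tX^{cs},\te_t}F_t\,dt$ has a non-convergent $X^c$ piece: since $f_*^tX^c=\eta(x)F_t$ and $\teps_t\nabla^2_{F_t,\te_t}F_t=F(\psi)(x_t)$ by the analogue of \eqref{e:ucsb}, this piece is $\eta(x)\int_0^\infty F(\psi)(x_t)\,dt$, a telescoping integral whose partial sums $\eta(x)(\psi(x_T)-\psi(x))$ oscillate along a recurrent orbit. Your sentence that this $X^c$ component ``produces the $-\psi\eps^c$ contribution'' is exactly where the gap sits: it would require the unjustified identity $\int_0^\infty F(\psi)(x_t)\,dt=-\psi(x)$, i.e.\ discarding $\psi(x_\infty)$. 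The paper's formal appendix proof reaches $+\psi\,\eps^c X$ cleanly and pointwise, never integrating $X^c$ over $t\ge 0$, which is precisely why its only formal gaps are the two it names (non-smooth splitting, and the non-transverse analogue of the holonomy identity \eqref{e:holo}). If you want a formal derivation entirely in $\cV^u$, imitate the appendix's Lie-derivative starting point and keep $X^c$ on the algebraic side; transcribing the $\cV^{cu}$ pipeline with $\xi\mapsto\xi'$ and $X^s\mapsto X^{cs}$ does not close.
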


\begin{proof}[An indirect proof for $\rho$ almost everywhere]
We give a proof of the theorem, but only for $\rho$ almost everywhere, based based on  on \cref{t:eqdiv}; it also shows that the two theorems do not contradict.
First notice that, in a local center-unstable foliation, each center-unstable manifold $\cV^{cu}_{loc}$ is foliated by the unstable foliation $\cV^u_{loc}$, and the map $\tf\xi':\cV^{cu}\to\cV^{cu}$ preserve the unstable foliation $\cV^u$. 
So $\tL^u$ can both be understood as a local transfer operator of the map $\tf\xi':\cV^u(x)\to \cV^u(x)$ or $\tf\xi':\cV^{cu}(x)\to\cV^{cu}(x)$, and an interesting fact is that for $\rho$ almost everywhere,
\begin{equation} \label{e:comp}
\frac{\tL^u\sigma^u}{\sigma^u} 
= \frac{\tL^u\sigma^{cu}}{\sigma^{cu}},
\quad
\frac{\delta \tL^u\sigma^u}{\sigma^u}
= \frac{\delta \tL^u\sigma^{cu}}{\sigma^{cu}},
\end{equation}
where $\sigma^{cu}$ is the conditional density in center-unstable manifold (the main body of the paper uses $\sigma$ to denote the same quantity).
Intuitively, this is because in our framework, the measure change is entirely accounted for by the change of the conditional density, so they all must equal.
By \cref{e:comp}, we have
\[ \begin{split}
\frac{\delta \tL^u\sigma^u} {\sigma^u}
= \frac{\delta \tL^u\sigma^{cu}} {\sigma^{cu}}
= \frac{\delta \tL^{cu}\sigma^{cu}} {\sigma^{cu}}
-\frac{\delta \tL^{c}\sigma^{cu}} {\sigma^{cu}}.
\end{split} \]

By substitution of \cref{t:eqdiv} into the above equation, we see that now we only need to prove
\begin{equation}
\label{e:wew}
     \frac{\delta \tL^{c}\sigma^{cu}}{\sigma^{cu}} (x) 
 =- F(\eta)
 \quad 
 \rho
 \textnormal{ almost everywhere.}
\end{equation}
To see this, pick any smooth observable $\Phi$, choose an regular point $x_0$, such that for any continuous function $g$, $\lim_{T\rightarrow\infty}\frac{1}{T} \int_0^T g(f^tx_0) dt=\rho(g)$, and we have
\[ \begin{split}
  \rho(\Phi F(\eta)) + \rho (X^c (\Phi))
  = \rho(\Phi F(\eta) + \eta F (\Phi))
  = \rho( F(\Phi\eta))
  \\
  = \lim_{T\rightarrow\infty}\frac 1T \int_0^T F(\Phi\eta)(f^tx_0) dt
  = \lim_{T\rightarrow\infty}\frac 1T ((\Phi\eta)(f^{T}x_0) - (\Phi\eta)(x_0))
  = 0.
\end{split} \]
Hence
\[ \begin{split}
  - \rho(\Phi F(\eta))
  = \rho (X^c (\Phi))
  = \rho \left(
  \Phi\left( \frac{\delta \tL^{c}\sigma^{cu}} {\sigma^{cu}}
  \right) \right) 
\end{split} \]
Since $\Phi$ is arbitrary, this proves \cref{e:wew}. 
\end{proof}

Another way to prove \cref{e:wew} is to rerun the proof of \cref{l:ty1,l:eps} to see that, let $e^{cu}$ be the vector field $e^{cu}(y_t)$ defined on $\cV^{cu}(x)$, then
\[ \begin{split}
  \frac{\delta \tL^{c}\sigma^{cu}} {\sigma^{cu}}
  = \eps^{cu}\cL_{X^c}e^{cu}
  = \eps^{cu} \cL_{\eta F} e^{cu}\\
  = \eps^{cu} \sum_i e_1\wcw \cL_{\eta F} e_{i} \wcw e_u   \wedge F.
\end{split} \]
Since  $\cL_{\eta F} e_{i} = -e_{i}(\eta)F + \eta\cL_F e_i= -e_{i}(\eta)F$, we have
\[ \begin{split}
  \frac{\delta \tL^{c}\sigma^{cu}} {\sigma^{cu}}
  = - \eps^{cu} \sum_i e_1\wcw e_{i}(\eta) F \wcw e_u  \wedge F.
\end{split} \]
Since $F \wedge F=0$, all terms in the sum are zero except for the last, so
\[ \begin{split}
  \frac{\delta \tL^{c}\sigma^{cu}} {\sigma^{cu}}
  = - \eps^{cu}F(\eta) e^{cu}
  = - F(\eta).
\end{split} \]

\begin{proof}[Formal but direct proof of \cref{t:divuu}]
Fix an arbitrary large number $T$, define the $u$-covector $\eps$ and the $u$-vector $e$ field  in a small neighbourhood of $x$ on $\cV^u(x)$ as
\[ \begin{split}
  \eps(y) = f^{*-T} \teps(y_{-T}),
  \quad \textnormal{} \quad 
  e(y) = f_*^{T} \te(y_{-T}),
  \quad \textnormal{} \quad 
  y\in \cV^u(x),
\end{split} \]
so $\eps e=1$.
In this proof let $\sigma^u$ be the conditional density of the measure obtained by pushing forward the Lebesgue measure for time $T$, so
$$\frac{\sigma^u(x)}{\sigma^u(y)}=\frac{|e(y)|}{|e(x)|}=\frac{|\eps(x)|}{|\eps(y)|}$$
The re-distribution of the conditional measure caused by $X^{u}$ is
\begin{equation} \begin{split} \label{e:xiaoyue}
  \frac{\delta \tL^u\sigma^u}{\sigma^u}
  = \frac{(\cL_{-X^{u}} \eps) e}{\eps e}
  = \eps (\cL_{X^{u}} e) 
  = \eps \nabla_{X^{u}}e - \eps \nabla_e X^u 
  \\
  = \eps  \left(\nabla_{X^{u}}e - \nabla_e X + \nabla_e X^c +\nabla_e X^s  \right)
\end{split} \end{equation}

For the second term in \eqref{e:xiaoyue}, by definition of $\div^v$,
\[ \begin{split}
   \eps  \nabla_e X =: \div^v X.
\end{split} \]

For the third term in \eqref{e:xiaoyue}, we can formally write
\[ \begin{split}
   \eps \nabla_e X^c 
   = \eps \nabla_e(\eta F)
   = \eps \sum_{i=1}^u e_1\wcw \left(e_i(\eta) F + \eta \nabla_{e_i}(F)\right) \wcw e_u 
   = \eta \eps \nabla_{e}F
   =: \eta \div^v F,
\end{split} \]
where $\eta:= \teps^c(X).$ 
The last equality uses that $\eps_i(F)=0$ for $\eps_i\in V^{*u}$.
The last expression is well defined since $F$ is smooth.

For the first term in \eqref{e:xiaoyue}, use lemma~\ref{l:vari},
\[ \begin{split}
  \eps \nabla_{X^{u}} e
  = \eps f_*^T \nabla_{f_*^{-T}X^{u}} e_{-T}
  + \eps \int_0^T f_*^{T-t} \nabla^2_{f_*^{t-T}X^{u},  e_{t-T}}F_{t-T} dt
  \\
  = \eps_{-T} \nabla_{f_*^{-T}X^{u}} e_{-T}
  + \int_0^T \eps_{t-T} \nabla^2_{f_*^{t-T}X^{u},  e_{t-T}}F_{t-T} dt.
\end{split} \]
The first terms goes to zero as $T\rightarrow 0$.
Denote $\om:=\eps\nabla^2_{\slot,  e}F$, then
\[ \begin{split}
  \eps \nabla_{X^{u}} e
  \rightarrow \int_{-\infty}^0 \om_{t}  f_*^{t}X^{u} dt
  = X^u \int_{-\infty}^0  f^{*t} \om_{t} dt,
  \quad \textnormal{as} \quad
  T\rightarrow \infty.
\end{split} \]

\begin{figure}[ht] \centering
  \includegraphics[width=0.8\textwidth]{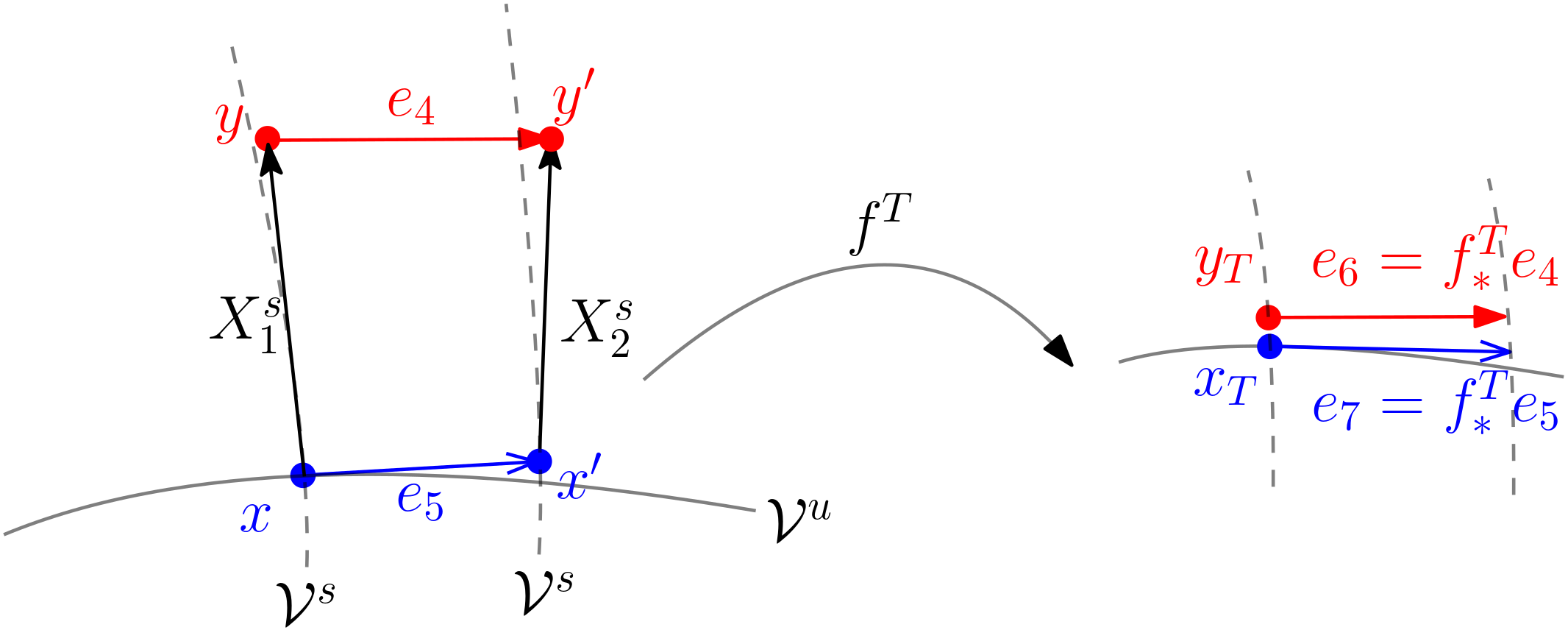}
  \caption{Intuitions for $\nabla_eX^s$. $x'-x=e_5=e(x)$, $y-x=X^s_1:=X^s(x)$,  $y'-x'=X^s_2:=X^s(x')$, and $e_4:=y'-y$. 
  Note that here $e_4$ is pushforward of $e$ along $X^s$: this is different from figure~\ref{f:alau}.}
  \label{f:bala}
\end{figure}

For the last term in~\eqref{e:xiaoyue}, the intuitive explanation for $\nabla_eX^s$  is illustrated in figure~\ref{f:bala}.
Intuitively, assuming $u=1$ and $\cM=\R^M$,
\[ \begin{split}
  \nabla_eX^s
  = X^s_2 - X^s_1 
  = (y'-x') - (y-x)
  = (y'-y) - (x'-x)
  = e_4-e_5
\end{split} \]
More specifically, if we denote the flow of $X^s$ by $\xi$, and $\xi_*e$ be a vector field on $\cV^s(x)$, which is the pushfoward vector field of $e$ by $\xi^\tau$ for a small interval of $\tau$.
Notice that here $\xi_*e$ is different from $e_{t\ge0}$ in the main body of the paper by a constant.
Then $\cL_{X^s}\xi_*e = 0$, so that $\nabla_{X^s}{e}=\nabla_{X^s}{\xi_*e|_{\tau=0}}$ is well defined and
\[ \begin{split}
  \nabla_{e} X^s 
  = \nabla_{\xi_*e|_{\tau=0}} X^s 
  = \nabla_{X^s}{\xi_*e}.
\end{split} \]
Apply lemma~\ref{l:vari}, at $\tau=0$,
\[ \begin{split}
  \eps \nabla_{X^s} \xi_*e
  = \eps_T f_*^T \nabla_{X^s} \xi_*e
  = \eps_T \nabla_{f_*^T X^s} f_*^T \xi_*e
  - \eps_T \int_0^T f_*^{T-t} \nabla^2_{f_*^t X^s,  e_t}F_t dt.
\end{split} \]
Notice that the $\xi$ can be seen as the holonomy map of local stable foliation $\cV^s$. Since the holonomy map formally tends to identity, we can formally have
\begin{equation} \begin{split} \label{e:ddd}
  \eps_T \nabla_{f_*^T X^s} f_*^T \xi_* e \rightarrow 0.
\end{split} \end{equation}
In figure~\ref{f:bala}, this means that $\eps_T(e_6-e_7)\rightarrow0$.
Although very plausible, this equation was not really proved.
The main reason is that $e$ is only a $u$-vector, so $\xi_*e$ can not be interpreted as the holonomy map in the textbook.
Holonomy map requires transverse intersections between $e$ and the stable, but the sum of their dimensions is 1 less than needed.
The detour we took in the main body of the paper is mainly because $e^{cu}$ and $\xi$ transversely intersect, so we can use a lemma of the absolute continuity offered in textbooks.

Nevertheless, if we accept the plausible formula in~\eqref{e:ddd}, then
\[ \begin{split}
  \eps \nabla_{e} X^s
  = \eps \nabla_{X^s} e
  = - \eps_T \int_0^T f_*^{T-t} \nabla^2_{f_*^t X^s,  e_t}F_t dt
  = - \int_0^T \eps_{t} \nabla^2_{f_*^t X^s, e_t}F_t dt
  = - X^s \int_0^T f^{*t}\om_t dt.
\end{split} \]
Summarizing, denote $\psi:=\div^v F$, then $\eta \div^v F=\psi \eps^c X$, and
\[ \begin{split}
  \frac{\delta \tL^u\sigma^u} {\sigma^u}
  = \eps \left(\nabla_{X^{u}}e - \nabla_e X + \nabla_e X^c +\nabla_e X^s  \right)
  \\
  = -\div^v X 
  + \psi \eps^c X
  - X^s \int_0^T f^{*t}\om_t dt
  + X^u \int_{-\infty}^0  f^{*t} \om_{t} dt.
\end{split} \]
By the same argument as \cref{e:ucsb}, we have $\om F=F(\psi)$, so we can let $T\rightarrow\infty$ and apply the adjoint shadowing lemma to get,
\[ \begin{split}
  \frac{\delta \tL^u\sigma^u} {\sigma^u}
  = -\div^v X - \cS(\div^v\nabla F,\div^vF)X.
\end{split} \]
\end{proof}

\bibliographystyle{abbrv}
{\footnotesize\bibliography{library}}

\end{document}